\documentclass{amsart}
\usepackage[utf8]{inputenc}
\usepackage{tikz,pgfkeys, pdfpages}
\usepackage{tikz-3dplot,forloop} 
\usetikzlibrary{arrows,calc,backgrounds,intersections} 
\usepackage{subcaption}

\usepackage[nointegrals]{wasysym}
\usepackage{enumitem}
\usepackage{amssymb, amsmath,stmaryrd}
\usepackage{amsthm,verbatim}
\usepackage{breqn}
\usepackage{mathtools}
\usepackage{graphicx}
\usepackage{color}
\usepackage{xcolor}
\usepackage{hyperref}
\usetikzlibrary{calc,fadings,decorations.pathreplacing}
\usetikzlibrary{positioning}
\usepackage{ marvosym }

\renewcommand{\div}{{\operatorname{div}}}

\newcommand{\bp}{\mathbf{p}}
\newcommand{\bq}{\mathbf{q}}
\newcommand{\bx}{\mathbf{x}}
\newcommand{\tri}[1]{\mathcal{T}_{#1}}

\newcommand{\domain}{{\Omega}}
\newcommand{\dd}{d}

\newcommand{\R}{\mathbb{R}^{\dd}}

\newcommand{\Vd}{{\bf H}^1_{\Gamma_D}(\domain)}

\newcommand{\qdelta}[1]{\mathbf{q}^\Delta_{#1}}

\newcommand{\bfsigma}{\boldsymbol{\sigma}}
\newcommand{\bftau}{\boldsymbol{\tau}}
\newcommand{\Hdiv}{H(\div;\Omega)}
\newcommand{\HdivN}{H_{\Gamma_D}(\div;\Omega)}
\newcommand{\HdivG}{H_{\Gamma_D,g}(\div;\Omega)}

\newcommand{\patch}[1]{\omega_{#1}}
\newcommand{\trito}[2]{\tri{#1}\rightarrow\tri{#2}}

\newcommand{\Rset}[1]{\mathcal R^{#1}_{\trito{\ell}{m}} }
\newcommand{\osc}{\textit{osc}}
\newcommand{\facets}{\mathcal E}
\newcommand{\patchsymbol}
{\mathrlap{{\small \hexstar}}{\hexagon}}
\newcommand{\vertiii}[1]{{\left\vert\kern-0.25ex\left\vert\kern-0.25ex\left\vert #1 
    \right\vert\kern-0.25ex\right\vert\kern-0.25ex\right\vert}}

\newcommand{\norm}[1]{\| #1 \|}
\newcommand{\bn}{{\mathbf n}}
\newcommand{\bff}{{\mathbf f}}
\newcommand{\bepsilon}{{\boldsymbol\varepsilon}}

\newcommand{\Vh}{V_h}
\newcommand{\cV}{\mathcal V}
\newcommand{\Vhexpl}{P^k(\mathcal T)}
\newcommand\data{\mathbf{D}}
\newcommand\AS{\mathbb{A}_s}
\newcommand\toterr[4]{\sigma(#1;#2,#3,#4)}

\newtheorem{Theorem}{Theorem}[section]
\newtheorem{Lemma}[Theorem]{Lemma}

\theoremstyle{remark}
\newtheorem{Remark}[Theorem]{Remark}

\numberwithin{equation}{section}

\newcommand\DB{\begin{color}{blue}}
\newcommand\BD{\end{color}}

\begin{document}

\title[The Prager--Synge theorem for AFEM]{The Prager--Synge theorem in reconstruction based a posteriori error estimation}%: a survey and optimal convergence}

\author[F. Bertrand]{Fleurianne Bertrand}
\address{Institut f\"ur Mathematik, Humboldt
Universit\"at zu Berlin, Unter den Linden 6, 10099 Berlin, Germany}
\email{fb@math.hu-berlin.de}
\thanks{The first author gratefully acknowledges support by the German Research
Foundation (DFG) in the Priority Programme SPP 1748 \textit{Reliable simulation
techniques in solid mechanics} under grant number BE6511/1-1.}

\author[D. Boffi]{Daniele Boffi}
\address{Dipartimento di Matematica ``F. Casorati'', University of Pavia, Italy and Department of Mathematics and System Analysis, Aalto University, Finland}
\email{daniele.boffi@unipv.it}
\thanks{The second author is member of the INdAM Research group GNCS and his research is partially supported by IMATI/CNR and by PRIN/MIUR}

\subjclass[2010]{65N30, 65N50}

\date{June 2019}

\begin{abstract}

\begin{comment}
The aim of this paper is to review the hypercircle method of Prager and Synge in the context of a posteriori error analysis. In particular, we discuss the Braess--Sch\"oberl error estimator, recalling its main properties and showing how it can be used in the framework of the adaptive finite element method.
\end{comment}

In this paper we review the hypercircle method of Prager and Synge. This theory inspired several studies and induced an active research in the area of a posteriori error analysis. In particular, we review the Braess--Sch\"oberl error estimator in the context of the Poisson problem. We discuss adaptive finite element schemes based on two variants of the estimator and we prove the convergence and optimality of the resulting algorithms.

\end{abstract}

\maketitle
\section{Introduction}

In this paper we review the hypercircle method introduced by Prager and Synge~\cite{PraSyn:47} and some of its consequences for the a posteriori analysis of partial differential equations.
We believe that it is useful to discuss a paper that has been the object of several studies and has induced an active research in the area of a posteriori analysis of partial differential equations.
On the one hand, it turns out that the hypercircle method is well appreciated by people working in the field, but less known by applied mathematicians with a less deep knowledge of a posteriori error analysis. On the other hand, we think that it is useful to discuss the consequences of the hypercircle method for a posteriori error analysis after some years of active research in the field, which has led in particular to a nowadays mature study of adaptive finite element schemes.
The hypercircle method provides a natural way to get guaranteed upper bounds for the error associated to Galerkin approximations; the corresponding lower bounds are more difficult to obtain and have been widely investigate in the literature.

It is now interesting to address the question whether an error estimator based on the hypercircle technique provides an optimally convergent method when combined with an adaptive strategy. This topic is less studied (see~\cite{MR2776915,MR2875241}) and we shall see that the answer to this question is not immediate.

The hypercircle method, originally developed for elasticity problems, can be used for several examples of PDEs.
Starting from the pioneer work of Ladev\`eze and Leguillon~\cite{LadLeg:83}, the Prager--Synge idea has led to several applications to the finite element approximation of elliptic problems~\cite{AinOde:93,MR1648383,MR2114285,MR2318795,MR2373174,MR2500243,MR2520373,MR2551163,MR2684353,MR2765501,MR2888308,MR2980729,MR3018142} and of problems in elasticity~\cite{Bra:13,MR2241823,MR2595045}. Other examples of applications include discontinuous Galerkin approximation of elliptic problems~\cite{MR3249368} or for convection-diffusion problems~\cite{MR2601287}; finite element approximation of convection-diffusion and reaction-diffusion problems has been studied in~\cite{MR2559737,MR3033032}. The Stokes problem and two phase fluid-flow have been considered in~\cite{MR2995179,MR3266956}. An intense activity is related to multiscale and mortar elements~\cite{MR3033022,MR3129761} as well as to porous media and porous elasticity~\cite{MR3621260,MR3622156,MR3761018}. Obstacle and contact problems have been studied in~\cite{MR2425501,MR2684731,MR2872024}. Further examples of applications include Maxwell's equations~\cite{MR3649425}, $hp$ finite elements~\cite{MR3556071}, and eigenvalue problems~\cite{MR3702871,MR3061473,BBS}. An interesting unified approach is provided in~\cite{ErnVoh:15} where the $p$-robustness of the error estimator is considered.

The hypercircle technique leads naturally to two methods: the so called \emph{gradient reconstruction} (related to the construction of the function $\nabla v$ of Figure~\ref{fig:hypercircle}) and the \emph{equilibrated flux} approach (related to the construction of the function $\bfsigma^*$ of Figure~\ref{fig:hypercircle}).

We develop our study starting from the case of the Laplace operator and we shall focus on the equilibrated flux approach. More precisely, we are going to discuss what is generally known as Braess--Sch\"oberl error estimator~\cite{BraSch:08}. For this estimator an a posteriori error analysis is well known which has been shown to be robust in the degree of the used polynomial~\cite{BraSch:08,BraPilSch:09}. We refer the interested reader in particular to the nice unified framework presented in~\cite{MR3335498} for more details on these results and for a complete survey of the use of equilibrated flux recovery in various applications.

The convergence analysis of the adaptive finite element method driven by non-residual error estimators has been performed in~\cite{MR2776915} and~\cite{MR2875241}. Both references start from the remark that it is not possible to expect in general a contraction property of the error and the estimator between two consecutive refinement levels. Since~\cite{MR2776915} is based on an assumption on the oscillations that might not be satisfied in our case (i.e., the oscillations are dominated by the estimator), in this paper we adopt the abstract setting of~\cite{MR2875241}. The Braess--Sch\"oberl estimator is considered in~\cite[Section~3.5]{MR2875241} where it is claimed that, up to oscillations, it is equivalent to the standard residual error estimator. We shall see that this property is not so immediate and that the consequence analysis has to be performed with particular care. In our paper we consider two variants of the Braess--Sch\"oberl estimator: the first one is the most standard and it is based on single elements (we denote it by $\eta^\Delta$); the second one is more elaborate and is based on patches of elements (denoted by $\eta^{\patchsymbol{}}$). The estimator $\eta^\Delta$ has been introduced in~\cite{BraSch:08}, while $\eta^\patchsymbol{}$ has been considered in~\cite{BraPilSch:09}. We are going to show that actually $\eta^{\patchsymbol{}}$ is equivalent, up to oscillations, to a residual estimator arranged on patches of elements (see Section~\ref{se:equiv}). We could not prove an analogous result for the estimator $\eta^\Delta$, which we analyze directly in Section~\ref{se:optimalfleur}. In both cases we have to pay attention to the appropriate definition and to the analysis of the oscillation terms. Oscillations are defined on patches of elements and the theory of~\cite{MR2875241} is modified accordingly. In turn, we present a clean theory where the convergence and the optimality of the adaptive schemes based on $\eta^\Delta$ and on $\eta^{\patchsymbol{}}$ is rigorously proved.

The structure of the paper is the following: in Section~\ref{se:PS} we recall the main results of the Prager--Synge hypercircle theory~\cite{PraSyn:47}, in Section~\ref{se:BS} we review the equilibrated flux reconstruction by Braess and Sch\"oberl~\cite{BraSch:08,BraPilSch:09}, in Section~\ref{se:equiv} we show the equivalence of the estimator $\eta^{\patchsymbol{}}$ with the standard residual one. We are then ready to recall in Section~\ref{se:star} the main ingredients of the theory of~\cite{MR2875241} and to apply it to the adaptive finite element method based on $\eta^{\patchsymbol{}}$. Finally, Section~\ref{se:optimalfleur} shows how to apply directly the theory of~\cite{MR2875241} to the estimator $\eta^\Delta$ without bounding it in terms of the standard residual estimator.

\section{The Prager--Synge theory and its application to error estimates}
\label{se:PS}

We start this section by reviewing the main aspects of the hypercircle theory introduced by Prager and Synge in~\cite{PraSyn:47}. The theory was developed for the mixed elasticity equation: the problem under consideration was to seek 
$\mathfrak{u} \in \Vd$ with 
\begin{align}
\label{elasticity}
\div \mathcal C \bepsilon( \mathfrak{u} ) = \bff,
\end{align}
where $C$ is the linear relationship between the stress and the strain.

In this paper we deal with the Poisson problem where a simplified version of the Prager--Synge theory can be applied.

Given a polytopal domain $\Omega$ in $\mathbb{R}^d$ and $f\in L^2(\Omega)$, our problem is to find $u\in H^1_0(\Omega)$ such that
\begin{equation}
\label{eq:variational}
(\nabla u,\nabla v)=(f,v)\quad\forall v\in H^1_0(\Omega).
\end{equation}
In this context, it is convenient to describe the Prager--Synge theory with the help of the mixed Laplacian equations. More precisely, let us consider the following problem: given $f\in L^2(\Omega)$, find $u\in L^2(\Omega)$ and $\bfsigma\in\Hdiv$ such that
\begin{equation}
\left\{
\aligned
&(\bfsigma,\bftau)+(\div\bftau,u)=0&&\forall\bftau\in\Hdiv\\
&(\div\bfsigma,v)=-(f,v)&&\forall v\in L^2(\Omega).
\endaligned
\right.
\label{eq:mixed}
\end{equation}

Problem~\eqref{eq:mixed} corresponds to the case of homogeneous boundary conditions on $u$. Clearly, more general boundary conditions can be considered. For the sake of completeness, we write down explicitly the general formulation associated to mixed boundary conditions $u=g_D$ on $\Gamma_D$ and $\bfsigma\cdot\bn=g_N$ on $\Gamma_N$, where $\partial\Omega$ is split in a Dirichlet part $\Gamma_D$ and in a Neumann part $\Gamma_N$. Let $\HdivN$ and $\HdivG$ denote the subspaces of vectorfields in $\Hdiv$ with normal component vanishing or equal to $g_N$, respectively, on $\Gamma_N$. Then the problem is: find $u\in L^2(\Omega)$ and $\bfsigma\in\HdivG$ such that
\begin{equation*}
\left\{
\aligned
&(\bfsigma,\bftau)+(\div\bftau,u)=\langle\bftau\cdot\bn,g_D\rangle|_{\Gamma_D}&&\forall\bftau\in\HdivN\\
&(\div\bfsigma,v)=-(f,v)&&\forall v\in L^2(\Omega),
\endaligned
\right.
\end{equation*}
where the brackets in the first equation represent the duality pairing between $H^{1/2}(\Gamma_D)$ and $H^{-1/2}(\Gamma_D)$ which, in the case of smooth functions, can be interpreted as
\[
\langle\bftau\cdot\bn,g_D\rangle|_{\Gamma_D}=\int_{\Gamma_D}g_D\bftau\cdot\bn\,ds.
\]
In this more general setting the analogue of~\eqref{eq:variational} reads: find $u\in H^1_{\Gamma_D,g}(\Omega)$ such that
\[
(\nabla u,\nabla v)=(f,v)-\langle g_N,v\rangle|_{\Gamma_N}\quad\forall v\in u\in H^1_{\Gamma_D}(\Omega),
\]
where $u\in H^1_{\Gamma_D}(\Omega)$ and $u\in H^1_{\Gamma_D,g}(\Omega)$ denote the subspace of $H^1(\Omega)$ with boundary conditions on $\Gamma_D$ vanishing or equal to $g_D$, respectively.

All the following theory could be stated in this general setting, but for the sake of readability we present it in the case when $\Gamma_N=\emptyset$ (so that $\Gamma_D=\partial\Omega)$ and $g_D=0$.

\medskip
\noindent\textbf{The equilibrium condition.} Let $\bfsigma^*$ be any function in $\Hdiv$ satisfying the equilibrium equation $\div\bfsigma^*=-f$; then it is easily seen that
\[
(\bfsigma,\bfsigma^*)=-(\div\bfsigma^*,u)=(f,u)=-(\div\bfsigma,u)=(\bfsigma,\bfsigma)
\]
from which the following orthogonality is obtained
\begin{equation}
(\bfsigma,\bfsigma-\bfsigma^*)=0.
\label{eq:1st}
\end{equation}

Equation \eqref{eq:1st} says that $\bfsigma$ lies on a hypersphere having $\bfsigma^*$ for diameter. The center of the sphere is denoted by $K$ in Figure~\ref{fig:hypercircle}.

\medskip
\noindent\textbf{Gradients of $\mathbf{H^1_0(\Omega)}$.} Let now $\bfsigma''$ be the gradient of any function $v$ in $H^1_0(\Omega)$
\[
\bfsigma''=\nabla v.
\]
It follows that
\[
(\bfsigma,\bfsigma'')=(\bfsigma,\nabla v)=-(\div\bfsigma,v)=(f,v)
\]
and that
\[
(\bfsigma^*,\bfsigma'')=(\bfsigma^*,\nabla v)=-(\div\bfsigma^*,v)=(f,v),
\]
which imply
\begin{equation}
(\bfsigma-\bfsigma^*,\bfsigma'')=0
\label{eq:2nd}
\end{equation}

The orthogonality stated in~\eqref{eq:2nd} can be expressed by saying that $\bfsigma$ and $\bfsigma^*$ lie on the same hyperplane orthogonal to $\bfsigma''$.

Putting together the orthogonalities of Equations~\eqref{eq:1st} and~\eqref{eq:2nd} leads to the conclusion that $\bfsigma$ and $\bfsigma^*$ lie on the \emph{\textbf{hypercircle}} $\Gamma$ given by the intersection of the hypersphere defined by~\eqref{eq:1st} and the hyperplane given by~\eqref{eq:2nd}. Moreover, let $\widehat{\bfsigma''}$ be the foot of $\bfsigma''$ on the hyperplane; since $\bfsigma-\bfsigma^*$ is orthogonal to $\bfsigma$ and $\widehat{\bfsigma''}$ is the orthogonal projection of $\bfsigma$ onto $\bfsigma''$, we have the following orthogonality
\[
(\bfsigma-\widehat{\bfsigma''},\bfsigma-\bfsigma^*) = 0,
\]
which implies that the segment connecting $\bfsigma^*$ to $\widehat{\bfsigma''}$ is a diameter of the hypercircle $\Gamma$. The center of this hypercircle is denoted by $C$ in Figure~\ref{fig:hypercircle}.

The conclusion of this construction, summarized in Figure~\ref{fig:hypercircle}, is an energy bound with constant one which we state in the following theorem.

% Orange triangle:
% s - gv' + s-s* = gv'-s*
% s - sh + s-s* = sh-s*
% i.e. (s-sh, s-s*) = 0
% <>  (s+s''-sh, s-s*) = 0
% <>  (-sh, s-s*) = 0

\begin{figure}
    %u = (1, -4, -10) and v = (8, 7, -2)

\makeatletter
\tikzset{
  use path/.code={\pgfsyssoftpath@setcurrentpath{#1}}
}

%\tikzset{use path/.code=\tikz@addmode{\pgfsyssoftpath@setcurrentpath#1}}
\makeatother

% 60/100
% arndt: 50/110 or 60 /98
\tdplotsetmaincoords{40}
	{110}
{
\begin{tikzpicture}[scale=1.2,tdplot_main_coords, 
  declare function={dicri(\t,\th,\ph,\R)=% 
  sin(\th)*sin(\ph)*(2+\R*cos(\t)/3+2*\R*sin(\t)/3)-%
  sin(\th)*cos(\ph)*(-2 +2*\R*cos(\t)/3 + \R*sin(\t)/3)+%
  cos(\th)*(1+2*\R*cos(\t)/3-2*\R*sin(\t)/3);}] 
  \pgfmathsetmacro{\R}{5}% 
  \path  coordinate (T) at (3,-3,3) 
   coordinate (I) at (1,-1,2) 
   coordinate (n) at (2,-2,1) 
   coordinate (u) at (1, 2, 2) 
   coordinate (v) at (2, 1, -2); 
  % the coordinatesn, u and v are not really used here 
   \path[tdplot_screen_coords,shift={(I)},use as bounding box] (-1.2*\R,-1.2*\R)rectangle (1.2*\R,1.2*\R);

  % \draw[red,thick,-latex] (0,0,0) -- 
  % ({sin(\tdplotmaintheta)*sin(\tdplotmainphi)}, 
  % {-sin(\tdplotmaintheta)*cos(\tdplotmainphi)},{cos(\tdplotmaintheta)}); 
  % normal to screen 
  \definecolor{myball}{RGB}{255,247,151}
  \begin{scope}[tdplot_screen_coords, on background layer] 
   \fill[ball color=myball, opacity=1] (I) circle (\R); 
   % determine the zeros of dicri 
   \path[overlay,name path=dicri] plot[variable=\x,domain=0:360,samples=73] 
   ({\x*1pt},{dicri(\x,\tdplotmaintheta,\tdplotmainphi,4)}); 
   \path[overlay,name path=zero] (0,0) -- (360pt,0); 
   \path[name intersections={of=dicri and zero,total=\t}] 
   let \p1=(intersection-1),\p2=(intersection-2) in 
   \pgfextra{\xdef\tmin{\x1}\xdef\tmax{\x2}}; 
  \end{scope} 
  \pgfmathsetmacro{\SmallR}{4} 
  \draw[dashed] plot[variable=\t,domain=\tmin:\tmax,samples=50,smooth] 
   ({1+2+\SmallR*cos(\t)/3+2*\SmallR*sin(\t)/3}, 
   {-1-2 +2*\SmallR*cos(\t)/3+ \SmallR*sin(\t)/3}, 
   {2+1+2*\SmallR*cos(\t)/3 - 2*\SmallR*sin(\t)/3 }); 
  \draw[thick,save path=\pathA] plot[variable=\t,domain=\tmax:\tmin+360,samples=50,smooth] 
   ({1+2+\SmallR*cos(\t)/3+2*\SmallR*sin(\t)/3}, 
   {-1-2 +2*\SmallR*cos(\t)/3+ \SmallR*sin(\t)/3}, 
   {2+1+2*\SmallR*cos(\t)/3 - 2*\SmallR*sin(\t)/3 }); 
  \path ({1+2+\SmallR*cos(\tmin)/3+2*\SmallR*sin(\tmin)/3}, 
   {-1-2 +2*\SmallR*cos(\tmin)/3+ \SmallR*sin(\tmin)/3}, 
   {2+1+2*\SmallR*cos(\tmin)/3 - 2*\SmallR*sin(\tmin)/3 }) coordinate (pmin)
   ({1+2+\SmallR*cos(\tmax)/3+2*\SmallR*sin(\tmax)/3}, 
   {-1-2 +2*\SmallR*cos(\tmax)/3+ \SmallR*sin(\tmax)/3}, 
   {2+1+2*\SmallR*cos(\tmax)/3 - 2*\SmallR*sin(\tmax)/3 }) coordinate (pmax);
  \begin{scope}[tdplot_screen_coords]
   \clip[shift={(I)}] (-1.2*\R,-1.2*\R)rectangle (1.2*\R,1.2*\R);   
   \path[fill=blue,fill opacity=0.2,even odd rule] let \p1=($(pmin)-(I)$),\p2=($(pmax)-(I)$),
   \p3=($(pmax)-(pmin)$),\n1={atan2(\y1,\x1)},\n2={atan2(\y2,\x2)},
   \n3={atan2(\y3,\x3)}
    in [use path=\pathA]  (pmin) arc(\n1:\n2-360:\R) 
    (1.5,-6) -- ++(\n3:{12cm/sin(\n3)}) -- ++(\n3+90:{12cm/sin(\n3)})
    -- ++(\n3+180:{12cm/sin(\n3)}) -- cycle;
  \foreach \v/\position in {T/above,I/below} { 
   \draw[fill=black] (\v) circle (0.7pt) node [\position=0.2mm] {$\ $}; 
  } 
  \end{scope}

\node[inner sep=0pt] (s) at ( 1.8, -2.322440841,   6.755118318){};

\node[inner sep=0pt] (gvs) at(  4.200000000,-3.67755915, -0.755118318){};

%\filldraw [violet] ( 5., -0.73350084e-1,   4.853299832) circle(1pt);
%\filldraw [violet] ( 5., -0.73350084e-1,   4.853299832) circle(1pt);
\node[inner sep=0pt] (u) at ( 5., -0.73350084e-1,   4.853299832){};

\draw[line width=0.5mm,->,orange] (u)--(s);
\draw[line width=0.5mm,->,orange] (u)--(gvs);
\draw[line width=0.5mm,->,orange] (s)--(gvs);

\node[inner sep=0pt] (O) at ( 0.2, 0.322440841, -2.755118318){};

\node[inner sep=0pt] (gv) at (6, -5.477559159, 0.144881682){};

%\definecolor{darkgreen}{rgb}{0.0, 0.2, 0.13}

\path [line width=0.5mm,->,red] (O) --  (gv);

\node[inner sep=0pt] (gv2) at (6, -5.177559159, -0.0544881682){\color{red}$\quad\quad\quad \bfsigma''=\nabla v  $};

\path [line width=0.5mm,->,blue,dashed] (O) -- node [below] {\color{blue} $ \nabla v' = \widehat   {\bfsigma''} \quad\quad\quad\quad\ $} (gvs);

\path [line width=0.5mm,->,blue,dashed] (O) -- node [right] {\color{black} K} (s);

\node[inner sep=0pt] (stext) at
( 1.8, -1.122440841,   4.755118318)
{\color{blue} $\ \bfsigma^*$};

\filldraw[red,->,line width=0.5mm] (O)--(gv)%--(gv)
;
\filldraw[red,->,line width=0.5mm] (gv)--(s)%--(gv)
;

\draw[line width=0.5mm,->,blue,dashed] (O)--(gvs);
\draw[line width=0.5mm,->,blue,dashed] (O)--(s);

\draw[line width=0.5mm,->,violet] (O)--(u);
\draw[line width=0.5mm,-,red] (gv)--(u);

\node[inner sep=0pt,rotate=105] (stext) at
( 1.8, -1.722440841,   1.755118318)
{$\color{brown} \|\bfsigma-\bfsigma^* \|  $};

\node[inner sep=0pt,rotate=73] (stext) at
( 1.8, -3.12440841,   1.755118318)
{$\color{brown} \|\nabla v'-\bfsigma^* \|  $};

\node[inner sep=0pt,rotate=55] (stext) at
( 1.8, -4.62440841,   1.755118318)
{$\color{red} \|\nabla v-\bfsigma^* \|  $};

\node[inner sep=0pt,rotate=31] (stext) at
( 1.8, -2.722440841,   -1.655118318)
{$\color{brown} \|\bfsigma-\nabla v' \|  $};

\node[inner sep=0pt,rotate=23] (stext) at
( 1.8, -2.722440841,   -0.305118318)
{$\color{red} \|\nabla u -\nabla v \|  $};

\node[inner sep=0pt] (stext) at
 ( 5., -0.73350084e-1,   4.053299832)
{$\color{violet} \ \ \ \ \quad \quad \quad \bfsigma = \nabla u  $};

\node[] (Ctext) at
( 1.8, -3.52440841,   1.755118318)
{$C$};

\node[] (Dtext) at
( 0.5, -4,   5.655118318)
{$\Gamma$};

\path [line width=0.5cm] (O) -- node [above left]
{ } (u);

\filldraw [blue] (O) circle(1pt);
\filldraw [orange] ( 0.2, 0.322440841, -2.755118318) circle(1pt);

\filldraw [red] ( 6, -5.477559159, 0.144881682) circle(1pt);
 
\draw[fill=black] (T) circle (1pt) node  {};
\draw[fill=black] (I) circle (1pt) node {};
\end{tikzpicture} }

%\newcounter{ct2}
%\newcounter{ct}
%\forloop{ct}{1}{\value{ct} < 5}%
%{%
%\forloop{ct2}{1}{\value{ct2} < 5}%
%{%
	
%	{
	%\newpage \arabic{ct}
%	\tdplotsetmaincoords{40}
	%{110}
	%\mysphere}
%}}
    \caption{The hypercircle construction}
    \label{fig:hypercircle}
\end{figure}

\begin{Theorem}
Let $\bfsigma$ be the second component of the solution to~\eqref{eq:mixed}; let $\bfsigma^*$ be any function in $\Hdiv$ which satisfies the equilibrium condition $\bfsigma^*=-f$ in $\Omega$ and let $\bfsigma''$ be the gradient of any function in $H^1_0(\Omega)$. Then
\[
\|\widehat\bfsigma''\|\le\|\bfsigma\|\le\|\bfsigma^*\|,
\]
where $\widehat\bfsigma''$ is the multiple of $\bfsigma''$ lying in the hyperplane orthogonal to $\bfsigma''$ and containing $\bfsigma$ (see Figure~\ref{fig:hypercircle}).
\end{Theorem}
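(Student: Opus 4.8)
The plan is to derive both inequalities directly from the two orthogonality relations \eqref{eq:1st} and \eqref{eq:2nd} established above, using nothing more than elementary geometry in the Hilbert space $L^2(\domain)^{d}$ of square-integrable vector fields. No new estimate is needed: the entire content is a translation of the hypercircle picture of Figure~\ref{fig:hypercircle} into the Cauchy--Schwarz and Pythagoras inequalities, so I expect the argument to be short, and the only care required is in pinning down the definition of $\widehat{\bfsigma''}$ and in treating degenerate cases.

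For the upper bound I would start from \eqref{eq:1st}, namely $(\bfsigma,\bfsigma-\bfsigma^*)=0$, i.e.\ $\norm{\bfsigma}^2=(\bfsigma,\bfsigma^*)$. Cauchy--Schwarz then gives $\norm{\bfsigma}^2\le\norm{\bfsigma}\,\norm{\bfsigma^*}$, whence $\norm{\bfsigma}\le\norm{\bfsigma^*}$ after dividing by $\norm{\bfsigma}$ (the case $\bfsigma=0$ being trivial). Geometrically this is exactly the statement that $\bfsigma$ lies on the Thales sphere of diameter $\bfsigma^*$, so that the leg $\norm{\bfsigma}$ cannot exceed the hypotenuse $\norm{\bfsigma^*}$.

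For the lower bound I would first make the definition of $\widehat{\bfsigma''}$ precise. Assuming $\bfsigma''\neq 0$, the vector $\widehat{\bfsigma''}$ is required to be a multiple $\lambda\bfsigma''$ lying in the hyperplane orthogonal to $\bfsigma''$ through $\bfsigma$, i.e.\ to satisfy $(\lambda\bfsigma''-\bfsigma,\bfsigma'')=0$; solving for $\lambda$ yields $\widehat{\bfsigma''}=\frac{(\bfsigma,\bfsigma'')}{\norm{\bfsigma''}^2}\,\bfsigma''$, which is precisely the orthogonal projection of $\bfsigma$ onto $\operatorname{span}\{\bfsigma''\}$. Consequently $\bfsigma-\widehat{\bfsigma''}$ is orthogonal to $\widehat{\bfsigma''}$, and Pythagoras gives $\norm{\bfsigma}^2=\norm{\widehat{\bfsigma''}}^2+\norm{\bfsigma-\widehat{\bfsigma''}}^2\ge\norm{\widehat{\bfsigma''}}^2$, that is $\norm{\widehat{\bfsigma''}}\le\norm{\bfsigma}$. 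When $\bfsigma''=0$ one sets $\widehat{\bfsigma''}=0$ and the bound is trivial.

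The main (and essentially only) obstacle is conceptual rather than technical: one must correctly read off from the construction that $\widehat{\bfsigma''}$ is the orthogonal projection of $\bfsigma$ onto the line spanned by $\bfsigma''$, after which the inequality is immediate. I would also record, for the a posteriori analysis to follow, that relation \eqref{eq:2nd} places $\bfsigma^*$ on the same hyperplane as $\bfsigma$, so that $\bfsigma$, $\bfsigma^*$ and $\widehat{\bfsigma''}$ all lie on the hypercircle $\Gamma$ of Figure~\ref{fig:hypercircle}; this consistency is what makes the two bounds fit together, even though each inequality on its own uses only one of the two orthogonalities.
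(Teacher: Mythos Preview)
Your proposal is correct and matches the paper's approach. The paper does not give a separate proof environment for this theorem; it is stated as the summary of the preceding geometric construction, which establishes exactly the orthogonalities \eqref{eq:1st} and \eqref{eq:2nd} and identifies $\widehat{\bfsigma''}$ as the orthogonal projection of $\bfsigma$ onto $\bfsigma''$, so your explicit Cauchy--Schwarz/Pythagoras write-up is precisely the intended argument made formal.
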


We now state another important consequence of the previous geometrical construction which applies to problem~\eqref{eq:variational} and which is usually referred to as Prager--Synge theorem.

\begin{Theorem}
Let $u$ be the solution of problem~\eqref{eq:variational}. Then it holds
\begin{align}
\norm{\nabla u -\nabla v}^2 +\norm{\nabla u -\bfsigma^*}^2 
=
\norm{\nabla v -\bfsigma^*}^2
\end{align}
for all $v\in H^1_0(\Omega)$ and all $\bfsigma^*\in\Hdiv$ satisfying the equilibrium condition $\div\bfsigma^*=-f$.
\label{th:PS}
\end{Theorem}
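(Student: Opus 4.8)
The plan is to prove this as a Pythagorean-type identity, exploiting the orthogonality relations already established in the preceding discussion. The key observation is that $\nabla u = \bfsigma$, so the claimed identity is precisely
\[
\norm{\bfsigma -\nabla v}^2 +\norm{\bfsigma -\bfsigma^*}^2 = \norm{\nabla v -\bfsigma^*}^2,
\]
which is the statement that the triangle with vertices $\nabla v$, $\bfsigma$, and $\bfsigma^*$ has a right angle at $\bfsigma$. Hence the entire content reduces to verifying that the two legs $\bfsigma-\nabla v$ and $\bfsigma-\bfsigma^*$ are $L^2(\Omega)$-orthogonal, after which the identity follows from the standard parallelogram/Pythagoras computation $\norm{a}^2+\norm{b}^2=\norm{a-b}^2$ valid whenever $(a,b)=0$.

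First I would write $\nabla v - \bfsigma^* = (\bfsigma - \nabla v) - (\bfsigma - \bfsigma^*)$, so that expanding the right-hand side gives
\[
\norm{\nabla v -\bfsigma^*}^2 = \norm{\bfsigma -\nabla v}^2 + \norm{\bfsigma -\bfsigma^*}^2 - 2(\bfsigma-\nabla v,\bfsigma-\bfsigma^*).
\]
Thus it suffices to show the cross term vanishes, i.e. $(\bfsigma-\nabla v,\bfsigma-\bfsigma^*)=0$. The natural way to obtain this is to combine the two orthogonality relations derived above: equation~\eqref{eq:1st}, namely $(\bfsigma,\bfsigma-\bfsigma^*)=0$, and equation~\eqref{eq:2nd} with the choice $\bfsigma''=\nabla v$, namely $(\bfsigma-\bfsigma^*,\nabla v)=0$. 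Subtracting the second from the first yields exactly $(\bfsigma-\nabla v,\bfsigma-\bfsigma^*)=0$, which is what is needed.

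Alternatively, and perhaps more transparently for a self-contained proof, I would verify the cross term directly from the weak formulations rather than quoting the figure-based geometry. Using $\bfsigma=\nabla u$ together with the defining property $(\nabla u,\nabla v)=(f,v)$ from~\eqref{eq:variational}, the equilibrium condition $\div\bfsigma^*=-f$, and integration by parts $(\bfsigma^*,\nabla v)=-(\div\bfsigma^*,v)=(f,v)$ (valid since $v\in H^1_0(\Omega)$ makes the boundary term vanish), one computes
\[
(\bfsigma-\nabla v,\bfsigma-\bfsigma^*)
=(\nabla u,\nabla u)-(\nabla u,\bfsigma^*)-(\nabla v,\nabla u)+(\nabla v,\bfsigma^*).
\]
Each term reduces via the relations above: $(\nabla u,\bfsigma^*)=(f,u)=(\nabla u,\nabla u)$ and $(\nabla v,\bfsigma^*)=(f,v)=(\nabla u,\nabla v)$, so all four terms cancel in pairs and the cross term is zero.

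The computation is entirely routine once the orthogonalities are in place, so there is no serious obstacle in the analysis; the only point requiring care is the integration by parts $(\bfsigma^*,\nabla v)=-(\div\bfsigma^*,v)$, which is legitimate precisely because $\bfsigma^*\in\Hdiv$ and $v\in H^1_0(\Omega)$, so that the duality pairing $\langle\bfsigma^*\cdot\bn,v\rangle$ on $\partial\Omega$ vanishes. This is the same hypothesis that makes~\eqref{eq:2nd} hold, and in the general mixed-boundary setting it is exactly where the boundary data would enter; restricting to $\Gamma_D=\partial\Omega$ and homogeneous data, as the excerpt does, keeps this step clean.
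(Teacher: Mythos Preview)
Your proof is correct and follows essentially the same route as the paper: combine the orthogonalities~\eqref{eq:1st} and~\eqref{eq:2nd} to obtain $(\bfsigma-\nabla v,\bfsigma-\bfsigma^*)=0$, then conclude by Pythagoras. The only slip is a harmless sign typo in the decomposition (you have $\nabla v-\bfsigma^*$ where it should read $\bfsigma^*-\nabla v$), which does not affect the squared norms.
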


\begin{proof}
From the orthogonalities defining the hypersphere and the hyperplane $(\bfsigma,\bfsigma-\bfsigma^*)=(\bfsigma-\bfsigma^*,\bfsigma'')=0$ if follows immediately $(\bfsigma-\bfsigma'',\bfsigma-\bfsigma^*)=0$ which gives the results with the identifications $\nabla u=\bfsigma$ and $\nabla v=\bfsigma''$.
\end{proof}

%\Mycomment{Do we keep this remark?}
%
%\begin{Remark}
%The theory of Prager and Synge applies, with the natural modifications, to other kinds of boundary conditions. We limit this presentation to the case of homogeneous Dirichlet boundary conditions, but this is by no means restrictive.
%\end{Remark}

The Prager-Synge theorem has been used in order to obtain error estimates in various contexts, starting from ~\cite{LadLeg:83}. We describe the application of Theorem~\ref{th:PS} in the case of the conforming finite element approximation of problem~\eqref{eq:variational}. Let $\Vh$ be a finite dimensional subspace of $H^1_0(\Omega)$ and consider the discrete problem: find $u_h\in\Vh$ such that
\begin{equation}
(\nabla u_h,\nabla v_h)=(f,v_h)\quad\forall v\in\Vh.
\label{eq:variational_h}
\end{equation}
We are going to consider a standard conforming $\Vh$, so that $u_h\in\mathcal P^k(\tri{})$, the space of continuous piecewise polynomials of degree less than or equal to $k$.

A direct application of Theorem~\ref{th:PS} with $v=u_h$ and $\bfsigma^*=\bq$ gives
\begin{equation}
\label{eq:appPS}
|\nabla u-\nabla u_h|\le\|\bq-\nabla u_h\|,
\end{equation}
where $\bq$ is any function in $\Hdiv$ with $\div\bq=-f$ in $\Omega$.
It turns out that the right hand side in~\eqref{eq:appPS} is a \emph{reliable} error estimator \emph{with constant one}. Clearly, this fundamental idea leads to a viable approach only if it is possible to construct $\bq$ in a practical way. This is what is generally called equilibrated flux reconstruction.
\begin{Remark}
In the case when $f$ is piecewise polynomial, a possible (not practical) definition of $\bq$ could be obtained by solving an approximation of the mixed problem~\eqref{eq:mixed}, so that $\bq$ is a discretization of $\bfsigma$. If $f$ is a generic function, a standard oscillation term will show up. A smart modification of this intuition is behind the Braess-Sch\"oberl construction presented later in this paper.
\end{Remark}

Ainsworth and Oden in~\cite[Chap.~6.4]{AinOde:00} show that $\bq$ can be efficiently constructed by solving local problems. Let $\mathcal E_{I}$ be the set of the interior edges of a shape-regular triangulation $\tri{}$. We will also denote by $\mathcal E_{B}$ the set of the boundary edges. In the case when problem~\eqref{eq:variational_h} is solved with polynomials of degree $k$, the reconstruction proposed in~\cite{AinOde:00} seeks $\qdelta{} = \bq - \nabla u_h$ such that
\begin{subequations}
\label{eq:fluxrecocond}
\begin{align}
\label{eq:fluxrecoconda}
&\div{ \qdelta{}} = - \Pi^k f - \div\ \nabla u_h\\
&\llbracket \qdelta{} \cdot \bn \rrbracket_E =
 -\llbracket
 \nabla u_h \cdot \bn
 \rrbracket_E \quad
 \quad\forall E \in \mathcal E_I,
\label{eq:fluxrecocondb}
\end{align}
\end{subequations}
where $\Pi^k f$ denotes the $L^2$ projection of $f$ onto polynomials of degree $k$.

\section{The Braess--Sch\"oberl construction}
\label{se:BS}

In~\cite{BraSch:08} Braess and Sch\"oberl show how to realize the above conditions \eqref{eq:fluxrecoconda} and \eqref{eq:fluxrecocondb} by exploiting some basic properties of the Raviart--Thomas finite element spaces. The resulting estimator is commonly called the Braess--Sch\"oberl error estimator.

The local problems can be solved on patches around vertices of the mesh. The construction has been extended to different problems and geometrical configurations, thus allowing for a very powerful and general equilibration procedure.
The reconstruction aims at defining $\qdelta{}$ in the broken Raviart--Thomas space of order $k$, that is
\begin{align}
RT^\Delta (\tri{}) = \{ \bq \in RT^k(T) \textrm{ for all } T \in \tri{} \}, \end{align}
where the Raviart--Thomas element is given by
\begin{align}
RT^k(T)=\{  \bp \in \mathbb{P}^{k+1}(T) \ : \ \bp(\bx) = \hat{\bp} (\bx) + \bx \tilde{p}, \ \hat{\bp}\in (\mathbb{P}^k(T))^d  \text{, } \tilde{p} \in \mathbb{P}^k(T) \}
\end{align}
and $\mathbb{P}^k(K)$ denotes the space of polynomials of degree at most $k$ on the domain $K$.
Clearly, since $u_h\in \mathcal P^k(\tri{})$, we will have that $\bq=\qdelta{}-\nabla u_h$ belongs to $RT^k(\tri{}) := RT^\Delta (\tri{}) \cap H (\div,\Omega) $ by virtue of the jump conditions~\eqref{eq:fluxrecocondb}.

The Braess-Sch\"oberl reconstruction is performed as follows.
Let $\mathcal V$ denote the set of vertices of the triangulation, $\nu\in \mathcal V$ a vertex, and $\patch{\nu}$ the patch of elements sharing the vertex $\nu$
\begin{equation}
  \omega_\nu := \bigcup \{ T \in \tri{} : \nu \mbox{ is a vertex of } T \}.
  \label{eq:vertex_patch}
\end{equation}
Let $\phi_\nu$ be the continuous piecewise linear Lagrange function with $\phi_\nu(\nu) =1$ and whose support is $\patch{\nu}$, (that is, the hat function equal to one at the node $\nu$), so that the following partition of unity property holds
\begin{equation}
  1 \equiv \sum_{\nu \in \cV} \phi_\nu \mbox{ on } \Omega.
  \label{eq:partition_of_unity}
\end{equation}
Hence $\qdelta{}$ can be decomposed into functions living on vertex patches, i.e.
$$\qdelta{} = \sum_{\nu \in \cV
} \phi_\nu \qdelta{}= \sum_{\nu \in \cV
} \qdelta{\nu},$$
where $\text{supp}(\qdelta{\nu}) =\patch{\nu}$ and $\qdelta{\nu} \cdot \bn = 0 \text{ on } \partial \omega_\nu$.

Since each facet belongs to two elements the conditions \eqref{eq:fluxrecoconda} and \eqref{eq:fluxrecocondb} mean that the function $\qdelta{\nu}$ has to fulfill

\begin{align}
\label{eq:local_equilibration}
\begin{split}
\begin{cases}
\div\ \qdelta{\nu} = 
-((f+\Delta u_h),\phi_\nu)_T
&\quad\text{  in each }T \in \patch{\nu}\\
[[\qdelta{\nu} \cdot\bn ]]=
-( [[\nabla u_h \cdot {\bn}]], \phi_\nu)_E
&\quad\text{ on each interior edge $E$ of }\patch{\nu}\\
\qdelta{\nu} \cdot \bn = 0
&\quad\text{  on }\partial\patch{\nu}.
\end{cases}
\end{split}
\end{align}

It is common to use a notation where the dependence on the discrete solution $u_h$ is made explicit, so that in general we are going to denote the reconstruction by $\qdelta{}(u_h)$ or its contribution coming from a patch $\qdelta{\nu}(u_h)$.

Two options are now given for the design of an error indicator based on the above reconstruction.
The first one, introduced in~\cite{BraSch:08}, considers directly the quantity $\qdelta{}(u_h)$ on each single element
\begin{align}
\eta^{\Delta}_T(u_h)     = 
\|\qdelta{}(u_h) \|_{0, T}
\qquad
\eta^{\Delta}(u_h,\tri{})     = 
\left( \sum\limits_{T\in \mathcal T}
(\eta^{\Delta}_T(u_h))^2 \right)^{1/2},
\end{align}
while the second on, presented in~\cite{BraPilSch:09}, is based on patches of elements
\begin{align}
\eta^{\patchsymbol}_\nu(u_h)   = 
\|\qdelta{\nu}(u_h) \|_{0, \patch{\nu}}
\qquad
\eta^{\patchsymbol}
(u_h,\tri{})     = 
\left( \sum\limits_{T
\in  \tri{}}
\sum\limits_{\nu
\in  \mathcal V_T}
(\eta^{\patchsymbol}_\nu(u_h))^2 \right)^{1/2}.
\end{align}

The estimators $\eta^{\Delta}_T(u_h)$ and $\eta^{\patchsymbol}_\nu(u_h)$ are clearly not equivalent.
People usually tend to consider $\eta^\Delta$ as the standard Breass--Sch\"oberl estimator, but it is clear that for the analysis sometimes $\eta^\patchsymbol{}$ may be more convenient.

An a posteriori analysis for both estimators is available in the sense that both satisfy a global reliability
\begin{equation}
    \|| u- u_\ell \||^2 \leq \eta^2({u_l,\tri{}})+\osc_{\tri{}}^2 (f)
\label{eq:BSrel}
\end{equation}
and a global efficiency
\begin{equation}
    \eta^2({u_\ell,\tri{}}) \leq  \|| u- u_\ell \||^2+\osc_{\tri{}}^2(f)
\label{eq:BSeff}
\end{equation}
up to oscillations (see, in particular, \cite[Theorems~9.4 and~9.5]{Bra:13}, and~\cite{BraSch:08,BraPilSch:09}). The definition of the oscillation terms need particular attention. We shall comment on that in the next sections.

\begin{comment}
For $k=1$, note that equation \eqref{eq:local_equilibration} reads
\begin{align}
\label{eq:local_equilibration0}
\begin{split}
\begin{cases}
\div\qdelta{\nu} = 
- \frac 1 {|T|} (f,\phi_\nu)_T
&\quad\text{  in each }T \in \patch{\nu}\\
[[\qdelta{\nu} \cdot\bn ]]_E=
- \frac 1 {d} [[\nabla u_h \cdot {\bn}]]
&\quad\text{ on each interior edge $E$ of }\patch{\nu}\\
\qdelta{\nu} \cdot \bn = 0
&\quad\text{  on }\partial\patch{\nu}.
\end{cases}
\end{split}
\end{align}
\end{comment}

Explicit formulas in the case $d = 2$ for the computation of $\qdelta{\nu}$ are given in \cite{bertrandCISM}. The direct construction is extended to $d=3$ in \cite{CaiZha:12b}. 

\section{Equivalence with the residual error estimator}
\label{se:equiv}

In this section we are going to show that, up to an oscillation term, the estimator $\eta^{\patchsymbol{}}$ is equivalent to an estimator based on the standard residual error estimator.

A crucial step for the analysis of the convergence of the adaptive scheme based on the Braess--Sch\"oberl error estimator is its local equivalence with a standard residual error estimator. This fact has been observed (without rigorous proof) in~\cite[Section~3.5]{MR2875241} and it has been used (without oscillations) in~\cite[Equation~2.17]{MR2776915}. The interested reader is referred to~\cite[Section~8]{CarFeiPagPra:14} for a more elaborate discussion about the equivalence between residual and non-residual error estimators.

The standard residual estimator for Laplace equation is based on two contributions: the element and jump residuals
\begin{equation}
\aligned
&R_T(v) = ( f + \Delta v )|_T   \\
&J_E ( v) = ( [[ \nabla v ]] \cdot \bn )|_E,
\endaligned
\end{equation}
where $T$ is an element of the triangulation $\tri{}$ and $E$ is a facet in the set of facets $\facets$. 
The residual estimator for $T\in\tri{}$ then reads
\begin{align}
\eta_{res}^2 (u_h,T) = \|h_T R_T(u_h)\|_{0,T}^2 +\|h_T^{1/2} J_{\partial T}(u_h)\|_{0,\partial T}^2,
\end{align}
where $J_{\partial T}(u_h)$ is viewed as a piecewise function over $\partial T$ and where as usual $h_T$ denotes the diameter of the element $T$.

It is well known that the error estimator defines a functional $R(u_h) \in (H_0^1(\Omega))^\prime$ as follows
\begin{equation}
\aligned
\langle R(u_h),v \rangle &= 
\sum\limits_{T\in\tri{}} (R_T(u_h), v)_T
+
\sum\limits_{E\in\mathcal E} 
\langle
J_E(u_h), v
\rangle_E
\\&= (f, v)_{\Omega}-(\nabla u_h, \nabla v)_{\Omega}
=(\nabla (u-u_h), \nabla v)_{\Omega}&&\forall v\in H_0^1(\Omega).
 \endaligned
 \end{equation}

The global residual error estimator on a triangulation $\mathcal T$ is usually defined by adding up the local contributions
\begin{align}
\eta_{res} (u_h,\mathcal T) = 
\left(\sum\limits_{T\in \mathcal T}
\eta_{res}^2 (u_h,T) 
\right)^{1/2}.
\end{align}
Unfortunately, no equivalence holds in general between $\eta^\patchsymbol{} (u_h, T)$ and $\eta_{res} (u_h, T)$; a crucial difference between the two estimators is that if an element $T$ belonging to the patch $\patch{\nu}$ is refined and the discrete solution $u_h$ doesn't change, then the error is not reduced, but the estimator $\eta_{res} (u_h,T)$ decreases because of the reduction of the mesh-size; on the other hand, $\eta^\patchsymbol{} (u_h,\patch{\nu})$ may not decrease since it is based on the equilibration procedure that might generate a reconstruction that is not different from the one computed on the coarser mesh.

An interesting alternative, described in~\cite{BraPilSch:09} for piecewise constant $f$, consists in building a residual error estimator which is based on element patches, so that the comparison with $\eta^\patchsymbol{}$ is more natural. This leads, for every node $\nu$ with corresponding Lagrangian function $\phi_\nu$, to the following definition
\begin{equation}
\aligned
&R_{\nu,T}(v) = \phi_{\nu}( f + \Delta v )|_T   \\
&J_{\nu,E} ( v) = \phi_{\nu}( [[ \nabla v ]] \cdot \bn )_E.
\endaligned
\end{equation}
We denote the corresponding global estimator by
\begin{align}
\tilde \eta_{res} (u_h,\tri{}) = 
\left(
\sum\limits_{T
\in  \tri{}}
\sum\limits_{\nu
\in  \mathcal V_T}
\tilde\eta_{res} (u_h,\nu) 
\right)^{1/2},
\end{align}
with 
\begin{align}
\tilde \eta^{res}_\nu (u_h) = 
\eta^{res} (u_h,\omega_\nu).
\end{align}

The next lemma states the local equivalence between $\eta^\patchsymbol{}$ and the patchwise residual estimator $\tilde\eta^{res}$.

\begin{Lemma}
Let $u_h$ be the solution of the variational formulation \eqref{eq:variational_h} and consider a node $\nu$ of the triangulation $\tri{}$. Then, it holds
\label{lem:reseqrec}
\begin{align}
    \eta^{\patchsymbol}_\nu(u_h) \simeq \tilde \eta^{res}_\nu (u_h)
\end{align}
up to the oscillation term $\sum\limits_{T \in \omega_\nu }\| h_T(id- \Pi^{k-1}_T)(f) \|_T$, that is
\begin{subequations}
\begin{align}
\label{eq:lesssim}
&\| \qdelta{\nu}(u_h) \|_{0,\omega_\nu} \lesssim \tilde \eta^{res}_\nu (u_h)+\sum\limits_{T \in \omega_\nu }\| h_T(id- \Pi^{k-1}_T)(f) \|_T \\
\label{eq:gtrsim}
&\| \qdelta{\nu}(u_h) \|_{0,\omega_\nu} +\sum\limits_{T \in \omega_\nu }\| h_T(id- \Pi^{k-1}_T)(f) \|_T \gtrsim \tilde \eta^{res}_\nu (u_h).
\end{align}
\end{subequations}

\end{Lemma}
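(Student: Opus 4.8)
The plan is to establish the two estimates \eqref{eq:lesssim} and \eqref{eq:gtrsim} separately, using the fact that by its defining conditions \eqref{eq:local_equilibration} the reconstruction $\qdelta{\nu}$ is the minimal--norm broken Raviart--Thomas flux on $\patch{\nu}$, with vanishing normal trace on $\partial\patch{\nu}$, whose elementwise divergence reproduces a projection of the $\phi_\nu$--weighted element residual and whose normal jumps reproduce the $\phi_\nu$--weighted facet residual. The two directions then rely on complementary ingredients: the lower bound \eqref{eq:gtrsim} on inverse and discrete trace inequalities for the polynomial flux $\qdelta{\nu}$, and the upper bound \eqref{eq:lesssim} on the stability of the equilibration, i.e.\ a uniformly bounded right inverse of the divergence--and--jump operator on the patch.

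First I would prove \eqref{eq:gtrsim}. On each $T\in\patch{\nu}$ the identity $\div\qdelta{\nu}=-\phi_\nu(f+\Delta u_h)$ (up to projection), together with the inverse inequality $\|\div p\|_{0,T}\lesssim h_T^{-1}\|p\|_{0,T}$ for polynomials, gives $\|h_T\,\phi_\nu(f+\Delta u_h)\|_{0,T}\lesssim\|\qdelta{\nu}\|_{0,T}+\|h_T(\mathrm{id}-\Pi^{k-1}_T)f\|_{0,T}$, the last term appearing precisely when the projected divergence datum is replaced by $f$ itself (note that $\phi_\nu\Delta u_h$ already lies in the projected space). Across each interior facet $E$ of $\patch{\nu}$ the relation $[[\qdelta{\nu}\cdot\bn]]=-\phi_\nu\,[[\nabla u_h\cdot\bn]]$ and a scaled discrete trace inequality yield $\|h_E^{1/2}\,\phi_\nu\,[[\nabla u_h\cdot\bn]]\|_{0,E}\lesssim\|\qdelta{\nu}\|_{0,\patch{\nu}}$. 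Summing the element and facet contributions over the patch reconstitutes $\tilde\eta^{res}_\nu(u_h)$, and \eqref{eq:gtrsim} follows.

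For the reverse bound \eqref{eq:lesssim} I must instead control the flux by its data. Since $\qdelta{\nu}$ minimizes $\|\cdot\|_{0,\patch{\nu}}$ over all admissible $RT^k$ fluxes, it suffices to produce one admissible flux whose norm is bounded by $\bigl(\sum_{T}\|h_T\,\phi_\nu(f+\Delta u_h)\|_{0,T}^2+\sum_E\|h_E^{1/2}\,\phi_\nu[[\nabla u_h\cdot\bn]]\|_{0,E}^2\bigr)^{1/2}$. This is exactly the local stability of the Braess--Sch\"oberl reconstruction: on a shape--regular vertex patch the divergence--and--normal--jump map admits a right inverse, with homogeneous normal trace on $\partial\patch{\nu}$, that is bounded uniformly in the mesh size after scaling to a reference patch (and in fact $p$--robustly, by the analysis of~\cite{BraPilSch:09}). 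Identifying the data norm with $\tilde\eta^{res}_\nu(u_h)$ and absorbing the $f$--versus--projection discrepancy into the oscillation term then gives \eqref{eq:lesssim}.

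Two points demand care, and the second is the genuine obstacle. First, the local problem \eqref{eq:local_equilibration} is solvable only when the divergence and jump data satisfy the patch compatibility (Gauss) condition; for an interior vertex this is exactly the Galerkin orthogonality $(\nabla u_h,\nabla\phi_\nu)=(f,\phi_\nu)$ furnished by \eqref{eq:variational_h} with $v=\phi_\nu\in\Vh$, whereas boundary vertices must be handled separately since there $\phi_\nu$ need not vanish on $\partial\Omega$. Second, and more substantively, the stability constant of the right inverse used in \eqref{eq:lesssim} must depend only on shape regularity and not on $h$; securing this uniform bound is where the real work lies and where one leans on the Raviart--Thomas theory underpinning the Braess--Sch\"oberl construction, the inverse estimates behind \eqref{eq:gtrsim} being comparatively routine.
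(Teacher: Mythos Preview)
Your outline is sound, but it departs from the paper in one direction and skips a step that is not automatic.

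For \eqref{eq:lesssim} you and the paper proceed essentially the same way: both rely on the local stability of the equilibration from \cite{BraPilSch:09}. The paper invokes the dual-norm bound of Theorem~7 there and then passes to the residual estimator by scaling on the patch; you phrase the same result as a bounded right inverse of the divergence--and--jump map together with minimality of $\qdelta{\nu}$. Note, however, that the paper does not define $\qdelta{\nu}$ as the minimal-norm solution of \eqref{eq:local_equilibration}, so your minimality premise is an additional assumption; the cited theorem covers the actual Braess--Sch\"oberl construction directly. For \eqref{eq:gtrsim} the paper instead follows a Verf\"urth-type route, writing $(\qdelta{\nu},\nabla v)$ in residual form and testing against an explicit $v$ built from element bubbles $\phi^3_T$ and edge bubbles $\phi^{k+2}_E-\Pi^{k+1}_T(\phi^{k+2}_E)$. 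Your route via the inverse estimate $\|\div\qdelta{\nu}\|_{0,T}\lesssim h_T^{-1}\|\qdelta{\nu}\|_{0,T}$ and the discrete trace inequality on $[[\qdelta{\nu}\cdot\bn]]$ is more direct and avoids constructing test functions altogether.

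The step you gloss over is the removal of the $\phi_\nu$ weight in \eqref{eq:gtrsim}. Your argument delivers only $\|h_T\,\phi_\nu R_T\|_{0,T}$ and $\|h_E^{1/2}\phi_\nu J_E\|_{0,E}$, whereas $\tilde\eta^{res}_\nu(u_h)=\eta^{res}(u_h,\omega_\nu)$ is built from the unweighted $R_T$ and $J_E$. Since $\phi_\nu$ vanishes on part of each $T\in\omega_\nu$, ``summing the contributions over the patch reconstitutes $\tilde\eta^{res}_\nu(u_h)$'' is not free: you need the norm equivalence $\|p\|_{0,T}\lesssim\|\phi_\nu p\|_{0,T}$ for polynomials $p$ of fixed degree (valid by shape regularity and finite dimensionality), applied to $\Pi^{k-1}_T R_T$ and to the polynomial jump $J_E$, with the nonpolynomial remainder of $f$ absorbed into the oscillation term. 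Once this is made explicit your argument is complete; the paper's bubble-function approach relies on the same mechanism implicitly.
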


\begin{proof}
%\textbf{efficiency:}\\
Let us start with the upper bound~\eqref{eq:lesssim}. When $f$ is piecewise polynomial of degree ${k-1}$, from \cite[Theorem~7]{BraPilSch:09} we have
\begin{align}
\label{Theo7BraPilSch}
\| \qdelta{\nu}(u_h) \|_{0,\omega_\nu} \lesssim 
\sup\limits_{
\underset{v \in H^1(\patch{\nu})}{\| v \|_1 = 1}
} 
\sum\limits_{T\in \patch{\nu}} (R_{\nu,T}(u_h),v)_{0,T}
+\sum\limits_{E\in \mathcal E_I(\patch{\nu})} (J_{\nu,E} (u_h),v)_{0,E}
\end{align}
and thus, using standard scaling arguments, 
\begin{align*}
\| \qdelta{\nu}(u_h) \|_{0,\patch{\nu}} &\lesssim 
\sup\limits_{
\underset{v \in H^1(\patch{\nu})}{\| v \|_1 = 1}
} 
\sum\limits_{T\in \patch{\nu}} 
 \| R_{T}(u_h)\|_{0,T} \|v\|_{0,T}
+ 
\sum\limits_{E \in \mathcal E^I_\nu}
\|J_{E}(u_h)\|_{0,E} \|v\|_{0,E} \\
&\lesssim 
\sup\limits_{
\underset{v \in H^1(\patch{\nu})}{\| v \|_1 = 1}
} 
\sum\limits_{T\in \patch{\nu}}  h_T\| R_{T}(u_h)\|_{0,T} \|v\|_{1,\omega_\nu}
+ \sum\limits_{E\in \mathcal E^I_\nu} h_E^{1/2}\|J_{E}(u_h)\|_{0,E}
\|v\|_{1,\omega_\nu}\\
&\lesssim 
\sum\limits_{T\in \patch{\nu}}  h_T\| R_{T}(u_h)\|_{0,T}
+\sum\limits_{E\in \mathcal E^I_\nu} h_E^{1/2}\|J_{E}(u_h)\|_{0,E}.
\end{align*}
If now $f$ is a generic function in $L^2(\Omega)$, then the first term in \eqref{Theo7BraPilSch} transforms into 
\[
\sup\limits_{
\underset{v \in H^1(\patch{\nu})}{\| v \|_1 = 1}
} 
\sum\limits_{T\in \patch{\nu}} (\Pi^{k}_T \left(\phi_{\nu}( f + \Delta u_h ) \right)|_T,v)_{0,T}
+((id-\Pi^{k}_T) \left(\phi_{\nu}( f + \Delta u_h ) \right)|_T,v)_{0,T}
\]
so that it remains to show that
\[
\sup\limits_{
\underset{v \in H^1(\patch{\nu})}{\| v \|_1 = 1}
} 
\sum\limits_{T\in \patch{\nu}} ((id-\Pi^{k}) \left(\phi_{\nu} f  \right),v)_{0,T}
\lesssim \sum\limits_{T \in \omega_\nu }\| h_T(id- \Pi^{k-1})(f) \|_T.
\]
Indeed
\begin{align*}
\sup\limits_{
\underset{v \in H^1_T(\patch{\nu})}{\| v \|_1 = 1}
} 
&\sum\limits_{T\in \patch{\nu}} ((id-\Pi^{k}_T) \left(\phi_{\nu} f  \right),v)_{0,T}\\
&= 
\sup\limits_{
\underset{v \in H^1(\patch{\nu})}{\| v \|_1 = 1}
} 
\sum\limits_{T\in \patch{\nu}} ((id-\Pi^{k}_T) \left(\phi_{\nu}f\right)|_T,v-(v,1)_T)_{0,T}\\
&= 
\sum\limits_{T\in \patch{\nu}} \| h_T (id-\Pi^{k}_T) \left(\phi_{\nu}f\right)\|_{0,T}\\
\end{align*}
and 
\begin{align*}
\| (id-\Pi^{k}_T) \left(\phi_{\nu}f\right)\|_{0,T} ^2&= 
((id-\Pi^{k}_T) \left(\phi_{\nu}f\right),\phi_{\nu}f)\\
&=((id-\Pi^{k}_T) \left(\phi_{\nu}f\right),\phi_{\nu}f-\phi_\nu\Pi_T^{k-1}f)\\
&\leq \| (id-\Pi^{k}_T) \left(\phi_{\nu}f\right) \|_{0,T} \| \phi_{\nu}f-\phi_\nu\Pi^{k-1}_T f\|_{0,T}\\
&\leq \| (id-\Pi^{k}_T) \left(\phi_{\nu}f\right) \|_{0,T}  \| f-\Pi^{k-1}_Tf\|_{0,T}
\end{align*}
since $\max\limits_{\bx\in T}(\phi_\nu(\bx))=1$. This implies
\begin{align*}
\| (id-\Pi^{k}_T) \left(\phi_{\nu}f\right)\|_{0,T} \leq  \| f-\Pi^{k-1}_Tf\|_{0,T}.
\end{align*}

Let us now show how to prove the lower bound~\eqref{eq:gtrsim}.
Recall that \eqref{eq:local_equilibration} implies
\begin{align*}
\label{eq:equilibration}
(\qdelta{\nu}, \nabla v) &= 
-\sum\limits_{T\in \omega_\nu}
((\Pi^{k-1}_Tf+\Delta u_h)\phi_\nu,v)_T
-\sum\limits_{E\in \mathcal E^I_\nu}
\langle [[\nabla u_h \cdot {\bn}]]_E \phi_\nu,v
\rangle_E \\
& = -\sum\limits_{T\in \omega_\nu}
(R_{\nu,T}(u_h),v)_T + ((f-\Pi^{k-1}_Tf)\phi_\nu,v)
-\sum\limits_{E\in \mathcal E^I_\nu}
\langle J_{\nu,E}(u_h),v
\rangle_E
\end{align*}
for any $v\in H^1(\patch{\nu})$ satisfying either zero boundary conditions or $(v,1)_{\patch{\nu}}=0$ in the case when $\nu$ is an internal node.
Now, take
\[
v=\tilde{v}-\int_{\patch{\nu}}\tilde{v},
\]
with $\tilde{v}$ is defined as follows
\begin{align*}
\tilde{v} = 
\sum\limits_{T \in \omega_\nu}
\phi^3_{T}+
\sum\limits_{E \in \mathcal E^I_{\nu}}
\phi^{k+2}_{E}-\Pi^{k+1}_T(\phi^{k+2}_{E}),
\end{align*}
where  $\phi^3_{T}$ denotes the cubic Lagrange bubble function corresponding to the barycenter of $T$ and $\phi^{k+2}_{E}$ one of the Lagrange functions of degree $k+2$ associated to the edge $E$. Since the norm of $v$ is bounded, we have
\[
\aligned
\sum\limits_{T \in \omega_\nu }\| h_T(id- \Pi^{k-1}_T)(f) \|_T  + \| \qdelta{\nu} \|
&\gtrsim\sum\limits_{T \in \omega_\nu }\| h_T(id-\Pi^{k-1}_T)(f) \|_T+\Big\vert(\qdelta{\nu},\nabla v)\Big\vert\\
&=\sum\limits_{T \in \omega_\nu }\| h_T(id-\Pi^{k-1}_T)(f) \|_T+\Big\vert(\qdelta{\nu},\nabla\tilde{v})\Big\vert.
\endaligned
\]
Moreover, we have that
\[
\aligned
\sum\limits_{T \in \omega_\nu }\| h_T(id- \Pi^{k-1}_T)(f) \|_T&=
\sup_{v\in L^2(\patch{\nu}}\sum\limits_{T \in \omega_\nu }\frac{(h_T(id- \Pi^{k-1}_T)(f),v)_T}{\|v\|_0}\\
&\ge \sum\limits_{T \in \omega_\nu }\frac{(h_T(id- \Pi^{k-1}_T)(f),\phi^3_T)_T}{\|\phi^3_T\|_0}\\
&\gtrsim\sum\limits_{T \in \omega_\nu }((id- \Pi^{k-1}_T)(f),\phi^3_T)\\
&\gtrsim\sum\limits_{T \in \omega_\nu }((id- \Pi^{k-1}_T)(f),\phi^3_T\phi_\nu).
\endaligned
\]
By inserting the expression for $\tilde{v}$ and by evaluating the different terms separately we finally obtain
\begin{align*}
\sum\limits_{T\in\omega_\nu }&\| h_T(id-\Pi^{k-1}_T)(f) \|_T 
+ \Big\vert(\qdelta{\nu}, \nabla (\phi^3_{T}+\phi^{k+2}_{E}-\Pi^{k+1}(\phi^{k+2}_{E})))\Big\vert \\
&\gtrsim
\Big\vert- \sum\limits_{T \in \omega_\nu}
((f+\Delta u_h)\phi_\nu,\phi^3_{T})_T
- \sum\limits_{E \in \mathcal E^I_{\nu}}
\langle [[\nabla u_h \cdot {\bn}]]_E \phi_\nu,
\phi^{k+2}_{E}-\Pi_{k+1}(\phi^{k+2}_{E})
\rangle_E\Big\vert \\ &
\gtrsim  \sum\limits_{T \in \omega_\nu} h_T \|f +\Delta u_h \|_{0,T} +
\sum\limits_{E \in \mathcal E^I_{\nu}} h_T^{1/2} \|J_{E}(u_h)\|_{0,E}.
\end{align*}
\end{proof}

\section{Optimal convergence rate for $\eta^\patchsymbol{}$}
\label{se:star}

In this section we recall the abstract theory developed in~\cite{MR2875241} for the analysis of
AFEM formulations where nonresidual estimators are used and we show how to use it for the analysis of the AFEM based on the Braess--Sch\"oberl error estimator. The interested reader is referred
to~\cite[Sections 4--6]{MR2875241} for all details of the theory. The main results, stated in Theorems~5.1
and~6.6, are the contraction property for the total error (which guarantees the convergence
of the AFEM procedure) and the quasioptimality of the rate of convergence in terms of number
of degrees of freedom.

As usual when dealing with adaptive schemes, we use a notation that takes into account the levels of refinement instead of the mesh size. We denote by $\tri{0}$ the initial triangulation of $\domain$ and by $u_\ell$ the discretization of $u$ on the triangulation $\tri{\ell}$ obtained from $\tri{0}$ after $\ell$ refinements. For some of the remaining notation we will adopt the one from~\cite{MR2875241}. 

\medskip
\noindent\textbf{Contraction property.}
If $u$ is the solution of problem~\eqref{eq:variational} and $u_j$ is the solution of the
corresponding discrete problem after $j$ refinements, the contraction property states the existence
of constants $\gamma>0$, $0<\alpha<1$, and $\mathcal{J}\in\mathbb{N}$ such that
\begin{equation}
\vertiii{u-u_{j+\mathcal{J}}}_\Omega^2+\gamma\osc^2_{\tri{j+\mathcal{J}}}(u_{j+\mathcal{J}},\tri{j+\mathcal{J}})
\le\alpha^2
\left(\vertiii{u-u_{j}}_\Omega^2+\gamma\osc^2_{\tri{j}}(u_{j},\tri{j})\right),
\label{eq:contraction}
\end{equation}
where the norm $\vertiii{\cdot}$ denotes the $H^1$-seminorm (equivalent to the norm in $H^1_0(\Omega)$).
The main difference with respect to the standard contraction property commonly used in this context
is that in general there might not be a contraction between two consecutive refinement levels $j$ and
$j+1$, but contraction is guaranteed every $\mathcal{J}$ levels.

\medskip
\noindent\textbf{Quasioptimal decay rate.}
The quasioptimality in terms of degrees of freedom is described as usual in the framework of
approximation classes. The triple $(u,f,\data)$, of the solution, the right hand side, and the other
data of problem~\eqref{eq:variational}, is in the approximation class $\AS$ if
\[
|(v,f,\data)|_\AS:=\sup_{N>0}(N^s\toterr{N}{v}{f}{\data})<\infty,
\]
where the total error $\toterr{N}{v}{f}{\data}$, in the set $\mathbb{T}_N$ of conforming triangulations
generated from $\tri{0}$ with at most $N$ elements more than $\tri{0}$, is defined as
\begin{equation}
\toterr{N}{v}{f}{\data}=
\inf_{\tri{}\in\mathbb{T}_N}
\inf_{V\in \Vhexpl}
(\vertiii{u-V}_\Omega^2
+\osc^2_{\tri{}}
(V,\tri{}))^{1/2}.
\label{eq:toterr}
\end{equation}

With this notation, the quasioptimal decay rate is expressed by the following formula
\begin{equation}
\vertiii{u-u_j}_{\Omega}+\osc_{\tri{j}}(u_j,\tri{j})\le C(\#\tri{j}-\#\tri{0})^{-s}|(v,f,\data)|_\AS,
\label{eq:optimality}
\end{equation}
where the constant $C$ is independent of $j$. We refer the interested reader to~\cite{MR2875241} for more
detail on the constant $C$, especially for its dependence on $s$.
Clearly, $C$ will depend in particular on the initial triangulation $\tri{0}$ and on the integer
$\mathcal{J}$ appearing in the above contraction property.

The assumptions needed in order to get~\eqref{eq:contraction} and~\eqref{eq:optimality} are divided into
three main groups: assumptions related to the \emph{a posteriori error estimators}, assumptions related to
the \emph{oscillations}, and assumptions related to the design of the \emph{adaptive finite element
method}. We are going to use the \emph{newest vertex bisection} algorithm for the refinement of the mesh (see, for instance,~\cite{MR2353951}).
While assumptions on oscillations and on the design of AFEM do not change when residual or nonresidual
a posteriori estimators are used, the main modification for the analysis of nonresidual estimators is given
by the verification of the assumptions related to the a posteriori error estimators. For this reason, we
focus in this section only on these assumptions (see~\cite[Assumption 4.1]{MR2875241}), which are the main
object of our analysis in the present paper. We will also make more precise the reduction assumption about the oscillations (see condition [H5] later on).
We adopt the notation of the previous section and we state
the assumptions for a generic error estimator $\eta(u_\ell,\tri{})$. In~\cite{MR2875241} there are some typos ($V$ instead of $U$, for instance) that we have corrected here.

\cite{MR2875241} considers a closed set called $K$-element made of elements or sides and denoted by $\mathcal K_{\tri{}}$. We restrict to the case when $K$ is a triangle. The following definition of refined set of order $j$ is needed between to (not necessarily consecutive) meshes $\tri{\ell}$ and $\tri{m}$
\begin{align*}
    \Rset{j} =\{
    T \in \tri{\ell}:  
    \min\limits_{
    T^\prime  \in \tri{m}\text{ and }
    T^\prime \subset  T  }
    \left( g(T^\prime)-g(T) \right) \geq j
    \},
\end{align*}
where the generation $g(T)$ of $T \in \tri{}$ is the number of bisections needed to create $T$ from the initial triangulation $\tri{0}$.

The four assumptions related to the
a posteriori error estimator state the existence of four constants $C_{re}$, $C_{ef}$, $C_{dre}$, and
$C_{def}$ and of an index $j^\star$ such that the following four conditions are satisfied.

\begin{description}

\item[{[H1] Global upper bound (reliability)}]

\[
\vertiii{u-u_l}^2_\Omega\le C_{re}(\eta(u_l,\tri{})^2+\osc_{\tri{}}(u_l,\tri{})^2).
\]

\item[{[H2] Global lower bound (efficiency)}]

\[
\eta(u_l,\tri{})^2\le C_{ef}(\vertiii{u-u_l}^2_\Omega+\osc_{\tri{}}(u_l,\tri{})^2).
\]

\item[{[H3] Localized upper bound (discrete reliability)}]

\[
\vertiii{u_m-u_l}_\Omega^2\le C_{dre}(\eta_{\tri{}}(u_l,\Rset{1})^2+\osc_{\tri{}}(u_l,\Rset{j^\star})^2).
\]

\item[{[H4] Discrete local lower bound (discrete efficiency)}]

\[
\eta_{\tri{}}(u_l,\Rset{j^\star})^2\le C_{def}(\vertiii{u_m-u_l}_\Omega^2+\osc_{\tri{}}(u_l,\Rset{j^\star})^2).
\]

\end{description}

In particular, it is clear that conditions H1 and H2 are satisfied by the estimators we are considering (see~\eqref{eq:BSrel} and~\eqref{eq:BSeff})

\begin{Remark}
Actually, in~\cite{MR2875241} the conditions H3 and H4 are stated with $\osc_{\tri{}}(u_l,\Rset{1})$ instead of $\osc_{\tri{}}(u_l,\Rset{j^\star})$. In our case, for technical reasons that will be apparent soon, we have to use $j^\star$ levels of refinements for the oscillations as well. The proof presented in~\cite{MR2875241} carries over to this situation with the natural modifications.
\end{Remark}

In H1-H4, particular attention has to be paid to the oscillation terms.
When we are using polynomials of degree $k$ for the solution of the discrete problem~\eqref{eq:variational_h}, we usually define the oscillation terms by introducing the projection $\Pi_{k-1}$ onto polynomials of degree $k-1$. The standard oscillation term would then read
\begin{align}
\osc(f,{\tri{}})  =
\left(
\sum
\limits_{T\in \mathcal T}
\| h_{T}(f-\Pi^{k-1}_T f) \|_{0,T}^2
\right)^{1/2}.
\end{align}
On the other hand, we will consider the estimator $\eta^{\patchsymbol{}}$ built on patches and for this reason it makes sense to introduce a corresponding definition of patch oscillations:
\begin{align}
\osc^\patchsymbol(f,{\tri{}})  =
\left(
\sum
\limits_{\nu \in \mathcal V_T}
\osc(f,\nu)^2
\right)^{1/2},
\end{align}
where
\begin{align}
\osc(f,\nu)  =
\left(
\sum
\limits_{T\in  \omega_\nu}
\| h_{T}(f-\Pi^{k-1}_T f) \|_{0,T}^2
\right)^{1/2}.
\end{align}

The critical assumption related to the oscillations (see~\cite[Assuption~4.2(a)]{MR2875241}) is the following one.

\begin{description}
\item[{[H5] Oscillation reduction}]
there exists a constant $\lambda\in]0,1[$ such that
\[
    \osc_{\tri{m}} (f,\tri{m})^2
    \leq
    \osc_{\tri{l}} (f,\tri{l})^2
    -
    \lambda
    \osc_{\tri{l}} (f,\Rset{{j^\star}})^2.
\]
\end{description}
\begin{figure}[h]
\begin{minipage}[c]{0.49\textwidth}
\begin{tikzpicture} [scale=1.5]
\foreach \angle in {0,60,...,300} {
\begin{scope}[rotate=\angle]
  \draw[thick] (0,0) -- ++(0:1) -- ++(120:1) -- ++(0:1) -- ++(-120:1)  -- ++(0:1) -- ++(120:2);
 % \filldraw (0:0) circle(2pt) ++(0:2) ++(120:1) circle(2pt);
  \end{scope}
}
\filldraw[blue,thick,opacity=0.3] (-1,0) --
(-1.5,{sqrt(0.75)})--
(-1,{sqrt(3)}) -- (1,{sqrt(3)})--(2,0)
-- (1.5,-{sqrt(0.75)}) --
(-0.5,-{sqrt(0.75)})-- (-1,0)
;
\filldraw[red,thick,opacity=0.3] (0,0) -- (1,0)--(0.5,{sqrt(0.75)}) -- (-0.5,{sqrt(0.75)});
\draw[red,thick] (1,0) --(-0.5,{sqrt(0.75)});
\filldraw [red] (0,0) circle(1pt);
\filldraw [red] (0.5,{sqrt(0.75)}) circle(1pt);

\filldraw [red] (1,0) circle(1pt);
\filldraw [red] (-0.5,{sqrt(0.75)}) circle(1pt);

%\filldraw [green] (0.5,0) circle(1pt);
\filldraw [green] (0,{sqrt(0.75)}) circle(1pt);
\filldraw [green] (-0.25,{sqrt(0.1875)}) circle(1pt);
%\filldraw [green] (0.75,{sqrt(0.1875)}) circle(1pt);

\filldraw [green] (-1,0) circle(1pt);
%\filldraw [green] (1.5,{sqrt(0.75)}) circle(1pt);
\filldraw [green] (0,{sqrt(3)}) circle(1pt);
%\filldraw [green] (0.5,-{sqrt(0.75)}) circle(1pt);

\draw[green,thick,] (-1,0) --(-0.25,{sqrt(0.1875)})--
(0.25,{sqrt(0.1875)});%--
%(0.75,{sqrt(0.1875)});%-- (1.5,{sqrt(0.75)});

\draw[green,thick,] (0,{sqrt(3)}) --(0,{sqrt(0.75)})--
(0.25,{sqrt(0.1875)});
%(0.5,0)-- (0.5,-{sqrt(0.75)});

\filldraw [blue] (0.25,{sqrt(0.1875)}) circle(1pt);

%\draw[yellow,thick,]  (-0.25,{sqrt(0.1875)})
%--(0,{sqrt(0.75)})--(0.75,{sqrt(0.1875)})--(0.5,0)
%--(-0.25,{sqrt(0.1875)});

%\filldraw [yellow] (0.125,{sqrt(0.046875)}) circle(1pt);
%\filldraw [yellow] (0.375,{sqrt(0.1875*0.25*9)}) circle(1pt);
%\filldraw [yellow] (0.625,{sqrt(0.1875*0.25)}) circle(1pt);
%\filldraw [yellow] (-0.125,{sqrt(0.1875*0.25*9)}) circle(1pt);
\end{tikzpicture}
\caption{$\mathcal S^0=\mathcal S^1=\mathcal S^2$}
\label{fig:tri1}
\end{minipage}
\begin{minipage}[c]{0.49\textwidth}
\begin{tikzpicture} [scale=1.5]
\foreach \angle in {0,60,...,300} {
\begin{scope}[rotate=\angle]
  \draw[thick] (0,0) -- ++(0:1) -- ++(120:1) -- ++(0:1) -- ++(-120:1)  -- ++(0:1) -- ++(120:2);
 % \filldraw (0:0) circle(2pt) ++(0:2) ++(120:1) circle(2pt);
  \end{scope}
}
\filldraw[blue,thick,opacity=0.3] (-1,0) --
(-1.5,{sqrt(0.75)})--
(-1,{sqrt(3)}) -- (1,{sqrt(3)})--(2,0)
-- (1.5,-{sqrt(0.75)}) --
(-0.5,-{sqrt(0.75)})-- (-1,0)
;
\filldraw[red,thick,opacity=0.3] (0,0) -- (1,0)--(0.5,{sqrt(0.75)}) -- (-0.5,{sqrt(0.75)});
\draw[red,thick] (1,0) --(-0.5,{sqrt(0.75)});
\filldraw [red] (0,0) circle(1pt);
\filldraw [red] (0.5,{sqrt(0.75)}) circle(1pt);

\filldraw [red] (1,0) circle(1pt);
\filldraw [red] (-0.5,{sqrt(0.75)}) circle(1pt);

\filldraw [green] (0.5,0) circle(1pt);
\filldraw [green] (0,{sqrt(0.75)}) circle(1pt);
\filldraw [green] (-0.25,{sqrt(0.1875)}) circle(1pt);
\filldraw [green] (0.75,{sqrt(0.1875)}) circle(1pt);

\filldraw [green] (-1,0) circle(1pt);
\filldraw [green] (1.5,{sqrt(0.75)}) circle(1pt);
\filldraw [green] (0,{sqrt(3)}) circle(1pt);
\filldraw [green] (0.5,-{sqrt(0.75)}) circle(1pt);

\draw[green,thick,] (-1,0) --(-0.25,{sqrt(0.1875)})--
(0.75,{sqrt(0.1875)})-- (1.5,{sqrt(0.75)});

\draw[green,thick,] (0,{sqrt(3)}) --(0,{sqrt(0.75)}) --
(0.5,0)-- (0.5,-{sqrt(0.75)});

\filldraw [blue] (0.25,{sqrt(0.1875)}) circle(1pt);

\draw[yellow,thick,]  (-0.25,{sqrt(0.1875)})
--(0,{sqrt(0.75)})--(0.75,{sqrt(0.1875)})--(0.5,0)
--(-0.25,{sqrt(0.1875)});

\filldraw [yellow] (0.125,{sqrt(0.046875)}) circle(1pt);
\filldraw [yellow] (0.375,{sqrt(0.1875*0.25*9)}) circle(1pt);
\filldraw [yellow] (0.625,{sqrt(0.1875*0.25)}) circle(1pt);
\filldraw [yellow] (-0.125,{sqrt(0.1875*0.25*9)}) circle(1pt);
\end{tikzpicture}
\caption{$\mathcal S^0 =\mathcal S^3$}
\label{fig:tri1b}
\end{minipage}
\end{figure}

\newcommand{\myRset}[1]{\mathcal R^{#1}_{\trito{\ell}{\ell+1}}}
\begin{Remark}
We need to modify the original assumption of~\cite{MR2875241} by replacing $\osc_{\tri{l}} (f,\Rset{{1}})$ with $\osc_{\tri{l}} (f,\Rset{{j^\star}})$. A simple example for the necessity of this modification is to consider a triangulation $\mathcal T_\ell$ and the triangulation $\mathcal T_{\ell+1}$ obtained with a minimal refinement, so that only two triangles belong to $\myRset{1}$. This refinement is marked in red in Figure~\ref{fig:tri1} and we can see that \begin{align*}
    supp(\eta^\patchsymbol{}(u_h,\myRset{1})) = 
    supp(\eta^\patchsymbol{}(u_h,\{ T \in T_{\ell+1}, T\notin T_{\ell}\}).
\end{align*}
Repeating this argument, we have also for some $k>1$ that
\begin{align*}
supp(\eta^\patchsymbol{}(u_h,\myRset{1})) = 
    supp(\eta^\patchsymbol{}(u_h,\{ T \in T_{\ell+k}, T\notin T_{\ell}\}).
\end{align*}
This is illustrated in Figure~\ref{fig:tri1b} with the green refinement leading to the set $\myRset{2}$. The same holds for $\myRset{3}$ (yellow refinement). We see in Figure~\ref{fig:tri2}, with the notation 
$\mathcal S^k:=supp(\eta^\patchsymbol{}(u_h,\myRset{k}))$, that only the fourth refinement leads to a reduction of the support of the estimator.
\end{Remark}

In the rest of this section we are going to show that hypotheses [H1-H4] hold true for $\eta^{\patchsymbol}$ and $\osc^\patchsymbol$, in the case when $d=2$, with $j^\star$ defined in the following lemma.

\begin{Lemma}
\label{le:j*}
Assume that the triangulation $\tri{\ell}$ is shape-regular. Let $\tri{m}$ be a triangulation obtained from $\tri{\ell}$ after $m-\ell$ refinements with the newest vertex bisection strategy (see, for instance,~\cite{MR2353951}).
Then there exists $j^\star$ such that $\Rset{j^\star}$ satisfies the following property: all triangles in $\omega_\nu$, for all $\nu\in\mathcal{V}_T$, and all their edges have an interior node that is a vertex of a triangle of $\tri{m}$.
\end{Lemma}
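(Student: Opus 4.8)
The plan is to reduce the statement to a single--triangle question and then to propagate the conclusion across the patch by means of the conformity of the refinement. First I would record what membership in $\Rset{j}$ means: by definition, $T\in\Rset{j}$ forces every descendant $T'\subset T$ with $T'\in\tri{m}$ to satisfy $g(T')-g(T)\ge j$, so that $T$ has been \emph{uniformly} bisected at least $j$ times between $\tri{\ell}$ and $\tri{m}$. Hence it suffices to exhibit a fixed integer $n_0$ (a single--triangle threshold) and then to show that, choosing $j^\star$ large enough, every coarse triangle of every patch $\patch{\nu}$ with $\nu\in\mathcal{V}_T$ inherits at least $n_0$ uniform bisections; this immediately yields an interior $\tri{m}$--vertex inside each such triangle and on each of its edges.

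For the single--triangle threshold I would invoke the fundamental fact that newest vertex bisection applied to the shape--regular mesh $\tri{0}$ produces only finitely many similarity classes of tagged triangles. A direct inspection of the bisection pattern shows that after two uniform bisections all three edges of a triangle already carry a midpoint, and that the third uniform bisection subdivides an interior reference edge and thus places a vertex in the open interior; since there are only finitely many tagged classes, the same $n_0$ (one may take $n_0=3$) works simultaneously for every triangle, independently of its shape and of its newest--vertex labelling.

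The delicate point, which I expect to be the main obstacle, is to pass from the deeply refined $T$ to the \emph{entire} patches $\patch{\nu}$: a triangle $K\in\patch{\nu}$ meeting $T$ only at $\nu$ is not, a priori, forced to be refined at all by the bisection of $T$ alone, and conformity only guarantees a match along shared edges. The mechanism I would use is the cyclic behaviour of reference edges under newest vertex bisection: a triangle cannot have one of its edges finely subdivided without being refined almost uniformly to a comparable depth, because each edge becomes a reference edge only periodically and subdividing it forces a full period of bisections of the whole triangle. Consequently the deep uniform refinement of $T$ subdivides the two edges of $T$ issuing from $\nu$ very finely, conformity transfers this fine subdivision to the neighbours sharing those edges, and the cyclic property upgrades it back to a near--uniform refinement of those neighbours, with a loss of guaranteed depth bounded by a fixed constant per step. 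Iterating this transfer around $\nu$ reaches every triangle of $\patch{\nu}$; the number of steps is the valence of $\nu$, which is bounded in terms of $\tri{0}$ by shape regularity. The quantitative control of the per--step depth loss, uniformly over the finitely many local configurations around a vertex, is precisely the technical heart of the argument.

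Combining these ingredients, I would set $j^\star$ large enough that, after the bounded number of conformity transfers around any vertex of $T$ and the associated bounded total depth loss, every triangle of every patch $\patch{\nu}$ still retains at least $n_0$ uniform bisections. By the single--triangle analysis this produces an interior $\tri{m}$--vertex in each such triangle and in the relative interior of each of its edges. Taking the maximum of the resulting thresholds over the finitely many local configurations yields a value of $j^\star$ independent of $\ell$, $m$ and of the particular triangle $T\in\Rset{j^\star}$, as required.
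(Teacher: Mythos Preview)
Your proposal is correct and follows essentially the same approach as the paper: both establish a single--triangle threshold ($n_0=3$ uniform bisections yield the interior--node property) and then propagate refinement depth to the edge--neighbours around each vertex of $T$ via conformity of newest vertex bisection, bounding the number of steps by the maximal patch valence $n^\star$ coming from shape regularity. The paper's proof is in fact sketchier than yours---it asserts the per--step depth transfer ($T\in\Rset{2k}\Rightarrow$ edge--neighbours in $\Rset{k}$) by inspection of figures and arrives at an explicit value $j^\star=3n^\star/4$, whereas you spell out the underlying mechanism (cyclic role of reference edges, finitely many similarity classes) more carefully and take $j^\star$ as a maximum over the finitely many local configurations; these are presentational differences, not differences in strategy.
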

\begin{proof}
Let $n^\star$ denote the maximum number of triangles in a patch in the triangulation $\tri{\ell}$. The shape-regularity of $\tri{\ell}$ implies that $n^\star$ is bounded.

We observe that if $T\in\Rset{2}$ then the adjacent triangles of $T$ belong at least to $\Rset{1}$; this is illustrated in Figure~\ref{fig:R2}.
If moreover $T\in\Rset{3}$ then $T$ has the interior node property, but two of the adjacent triangles could still belong only to $\Rset{1}$ as is it shown in Figure~\ref{fig:R3}.

\begin{figure}
\begin{minipage}[c]{0.49\textwidth}
\begin{tikzpicture} [scale=2]
\foreach \angle in {0,60,...,300} {
\begin{scope}[rotate=\angle]
  \draw[thick] (0,0) -- ++(0:1) -- ++(120:1) -- ++(0:1) -- ++(-120:1)  -- ++(0:1) -- ++(120:2);
 % \filldraw (0:0) circle(2pt) ++(0:2) ++(120:1) circle(2pt);
  \end{scope}
}

\filldraw[blue,thick,opacity=0.3] (-0.75,{sqrt(0.1875)}) --
(-1.5,{sqrt(0.75)})--
(-1,{sqrt(3)}) -- (-0.25,{3*0.25*sqrt(3)})--(0,{sqrt(3)})--(0.25,{3*0.25*sqrt(3)})--(1,{sqrt(3)})--(1.5,{0.5*sqrt(3)})
-- (0.75,{0.25*sqrt(3)}) --(0.5,0)--
(0.5,-{sqrt(0.75)})--(-0.5,-{sqrt(0.75)})-- (-0.5,0)--(-1,0)
;
\filldraw[red,thick,opacity=0.3] (0,0) -- (1,0)--(0.5,{sqrt(0.75)}) -- (-0.5,{sqrt(0.75)});

\draw [orange] --
(-1.5,{sqrt(0.75)});

\draw [orange] (-0.5,0)--
(-0.5,-{sqrt(0.75)});

\draw [orange] (-0.25,{sqrt(0.1875*9)})--
(-1,{2*sqrt(0.75)});

\draw [orange] (0.25,{sqrt(0.1875*9)})--
(1,{2*sqrt(0.75)});

\draw [orange] (-0.125,{sqrt(0.1875*0.25)})--
(0.125,{sqrt(0.1875*0.25)})--
(-0.25,{sqrt(0.1875*4)})
--(-0.25,{sqrt(0.1875*9)})--
(0,{sqrt(0.1875*4)})--
(0.25,{sqrt(0.1875*9)})--
(0.25,{sqrt(0.1875*4)})--(0.375,{sqrt(0.1875*0.25*9)})--(-0.375,{sqrt(0.1875*0.25*9)})--
(-0.75,{sqrt(0.1875)})--(-0.25,{sqrt(0.1875)})
--(-0.5,0)--(-0.125,{sqrt(0.1875)*0.5});

\draw[red,thick] (1,0) --(-0.5,{sqrt(0.75)});
\filldraw [red] (0,0) circle(1pt);
\filldraw [red] (0.5,{sqrt(0.75)}) circle(1pt);

\filldraw [red] (1,0) circle(1pt);
\filldraw [red] (-0.5,{sqrt(0.75)}) circle(1pt);

\filldraw [green] (0.5,0) circle(1pt);
\filldraw [green] (0,{sqrt(0.75)}) circle(1pt);
\filldraw [green] (-0.25,{sqrt(0.1875)}) circle(1pt);
\filldraw [green] (0.75,{sqrt(0.1875)}) circle(1pt);

\filldraw [green] (-1,0) circle(1pt);
\filldraw [green] (1.5,{sqrt(0.75)}) circle(1pt);
\filldraw [green] (0,{sqrt(3)}) circle(1pt);
\filldraw [green] (0.5,-{sqrt(0.75)}) circle(1pt);

\draw[green,thick,] (-1,0) --(-0.25,{sqrt(0.1875)})--
(0.75,{sqrt(0.1875)})-- (1.5,{sqrt(0.75)});

\draw[green,thick,] (0,{sqrt(3)}) --(0,{sqrt(0.75)}) --
(0.5,0)-- (0.5,-{sqrt(0.75)});

\filldraw [blue] (0.25,{sqrt(0.1875)}) circle(1pt);

\draw[yellow,thick,]  (-0.25,{sqrt(0.1875)})
--(0,{sqrt(0.75)})--(0.75,{sqrt(0.1875)})--(0.5,0)
--(-0.25,{sqrt(0.1875)});

\filldraw [yellow] (0.125,{sqrt(0.046875)}) circle(1pt);
\filldraw [yellow] (0.375,{sqrt(0.1875*0.25*9)}) circle(1pt);
\filldraw [yellow] (0.625,{sqrt(0.1875*0.25)}) circle(1pt);
\filldraw [yellow] (-0.125,{sqrt(0.1875*0.25*9)}) circle(1pt);

\filldraw [orange]
(-1.5,{sqrt(0.75)}) circle(1pt);

\filldraw [orange] (-0.5,0) circle(1pt);
\filldraw [orange](-0.5,-{sqrt(0.75)})circle(1pt);

\filldraw [orange] (-0.25,{sqrt(0.1875*9)}) circle(1pt);
\filldraw [orange](-1,{2*sqrt(0.75)})circle(1pt);

\filldraw [orange] (0.25,{sqrt(0.1875*9)}) circle(1pt);
\filldraw [orange] 
(1,{2*sqrt(0.75)}) circle(1pt);

\filldraw [orange] (-0.125,{sqrt(0.1875*0.25)})circle(1pt);
\filldraw [orange]
(0.125,{sqrt(0.1875*0.25)})circle(1pt);
\filldraw [orange]
(-0.25,{sqrt(0.1875*4)})circle(1pt);
\filldraw [orange](-0.25,{sqrt(0.1875*9)})circle(1pt);
\filldraw [orange]
(0,{sqrt(0.1875*4)})circle(1pt);
\filldraw [orange]
(0.25,{sqrt(0.1875*9)})circle(1pt);
\filldraw [orange]
(0.25,{sqrt(0.1875*4)})circle(1pt);
\filldraw [orange] (0.375,{sqrt(0.1875*0.25*9)})circle(1pt);
\filldraw [orange]
(-0.375,{sqrt(0.1875*0.25*9)})circle(1pt);
\filldraw [orange]
(-0.75,{sqrt(0.1875)})circle(1pt);
\filldraw [orange](-0.25,{sqrt(0.1875)})circle(1pt);

\filldraw [orange](-0.5,0)circle(1pt);
\filldraw [orange](-0.125,{sqrt(0.1875)*0.5})
circle(1pt);

%\filldraw [orange] (1,{sqrt(0.1875*4)}) circle(1pt);
%\filldraw [orange] (1.25,{sqrt(0.1875)}) circle(1pt);

%\filldraw [orange] (0.75,-{sqrt(0.1875)}) circle(1pt);
%\filldraw [orange] (0.75+0.125,{sqrt(0.1875*0.25)}) circle(1pt);
%\filldraw [orange] (-0.125,{sqrt(0.1875*0.25)}) circle(1pt);
%\filldraw [orange] (0.75,0) circle(1pt);
%\filldraw [orange] (0.25,0) circle(1pt);

\filldraw [orange] (-0.75,{sqrt(0.1875)}) circle(1pt);
\filldraw [orange] (-0.5,0) circle(1pt);

%\filldraw [orange] (0.25,0) circle(1pt);
%\filldraw [orange] (-0.25,{sqrt(0.1875*4)})
%circle(1pt);
%\filldraw [orange] (0.25,{sqrt(0.1875*9)})
%circle(1pt);
%\filldraw [orange] (-0.25,{sqrt(0.1875*9)})
%circle(1pt);
%\filldraw [orange] (0.25,{sqrt(0.1875*4)})
%circle(1pt);
\end{tikzpicture}
\caption{$\mathcal S^4 \subsetneq \mathcal S^0$}
\label{fig:tri2}
\end{minipage}
\end{figure}
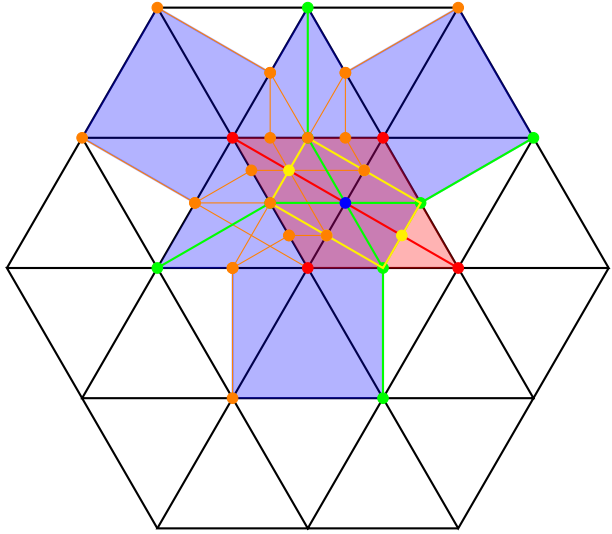
However, $T \in \Rset{4}$ implies that the two adjacent triangles belong at least to $\Rset{2}$ (see Figure~\ref{fig:R4}).
Similarly, $T\in\Rset{6}$ implies that the two adjacent triangles belong to $\Rset{3}$.
Repeating this argument shows that for $j^\star = 3n^\star/4$ all triangles in the patch have the interior node property, and all facets have an interior node.
% for number of triangle of path odd equal to n :
%   2    Rset{6}       two ajdacent \Rset{3}
%   3   Rset{9}         two ajdacent \Rset{6}   ->     four ajdacent \Rset{3}
%   4    Rset{12}        two ajdacent \Rset{9}   ->     four ajdacent \Rset{6} -> six adjacent in \Rset{3}
%  j     Rset{3*j}      2*(j-1) in   \Rset{3}
% we need j= (n-1)/4+1
%jj = 3*((n-1)/4)
\end{proof}

We are now showing that conditions [H1-H4] hold true for the residual error estimator defined on patches $\tilde\eta^{res}$; thanks to the equivalence proved in Section~\ref{se:equiv} the same conditions will hold for the Braess--Sch\"oberl estimator $\eta^{\patchsymbol{}}$ as well.

\begin{Lemma}[H3 --- discrete reliability for $\tilde\eta^{res}$]
Let $\tri{m}$ be a refinement of $\tri{\ell}$. Then
\[
\|| u_\ell-u_m \|| \lesssim 
    \tilde\eta^{res}
    (u_\ell,\Rset{1})+
    \osc^\patchsymbol(u_\ell,\Rset{1})
\]
\label{le:H3}
\end{Lemma}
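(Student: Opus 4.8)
The plan is to establish the discrete reliability estimate [H3] for the patchwise residual estimator $\tilde\eta^{res}$ by testing the Galerkin error between the two discrete solutions $u_\ell$ and $u_m$ against itself and localizing the resulting residual onto the refined region $\Rset{1}$. First I would set $e_{\ell m} := u_m - u_\ell$, which lies in the finer space $\Pk{\tri{m}}$ but \emph{not} in the coarser space $\Pk{\tri{\ell}}$. Using the two discrete variational problems~\eqref{eq:variational_h} on $\tri{\ell}$ and on $\tri{m}$, Galerkin orthogonality gives $(\nabla e_{\ell m},\nabla w_\ell)=0$ for every $w_\ell\in\Pk{\tri{\ell}}$, so that
\[
\vertiii{u_m-u_\ell}_\Omega^2=(\nabla e_{\ell m},\nabla e_{\ell m})=(\nabla(u-u_\ell),\nabla e_{\ell m})=\langle R(u_\ell),e_{\ell m}\rangle.
\]
The key structural fact is that the coarse-mesh residual functional $R(u_\ell)$ is supported, after subtracting an arbitrary coarse interpolant, only on those elements where $\tri{m}$ differs from $\tri{\ell}$, i.e.\ on $\Rset{1}$.

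Next I would localize. Because $(\nabla e_{\ell m},\nabla w_\ell)=0$ for all $w_\ell\in\Pk{\tri{\ell}}$, I may replace $e_{\ell m}$ by $e_{\ell m}-I_\ell e_{\ell m}$ in the residual pairing, where $I_\ell$ is a quasi-interpolation (Scott--Zhang / Clément) operator onto the coarse space. Expanding $\langle R(u_\ell),e_{\ell m}-I_\ell e_{\ell m}\rangle$ into its element and jump contributions $R_T(u_\ell)$ and $J_E(u_\ell)$, the standard interpolation estimates kill all contributions from elements that are \emph{not} touched by the refinement: on any element $T$ and edge $E$ lying entirely in the unrefined part, $e_{\ell m}$ is itself a coarse polynomial there and the interpolation error vanishes (or, more carefully, the contributions telescope away), leaving only terms whose support meets $\Rset{1}$. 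Then I would apply the partition of unity $\sum_\nu\phi_\nu\equiv1$ to rewrite each element and jump residual as the sum of the patchwise residuals $R_{\nu,T}(u_\ell)$ and $J_{\nu,E}(u_\ell)$, and bound each factor by Cauchy--Schwarz together with the local approximation estimates $\|e_{\ell m}-I_\ell e_{\ell m}\|_{0,T}\lesssim h_T\|\nabla e_{\ell m}\|_{0,\omega_T}$ and the analogous trace estimate on edges. Summing over nodes $\nu$ whose patches intersect $\Rset{1}$ and using the finite overlap guaranteed by shape-regularity, this produces
\[
\vertiii{u_m-u_\ell}_\Omega^2\lesssim\Big(\tilde\eta^{res}(u_\ell,\Rset{1})+\osc^\patchsymbol(u_\ell,\Rset{1})\Big)\,\vertiii{u_m-u_\ell}_\Omega,
\]
where the oscillation term absorbs precisely the data approximation $f-\Pi^{k-1}f$ that separates $R_{\nu,T}$ from the projected residual, exactly as in the upper-bound computation in the proof of Lemma~\ref{lem:reseqrec}. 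Dividing through by $\vertiii{u_m-u_\ell}_\Omega$ gives the claim.

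The main obstacle, and the point requiring genuine care, is the \emph{localization to $\Rset{1}$ rather than to the full mesh}. For the classical (non-patch) residual estimator this localization is standard, but here two subtleties arise. First, the patch-based structure means that a node $\nu$ whose patch only \emph{touches} a refined element still contributes, so I must check that the set of contributing patches is contained in a fixed-overlap neighborhood of $\Rset{1}$ and does not spill over uncontrollably; shape-regularity and the bounded patch count $n^\star$ from Lemma~\ref{le:j*} handle this, but the bookkeeping must be explicit. Second, the jump terms $J_E$ on edges separating a refined from an unrefined element require that $e_{\ell m}-I_\ell e_{\ell m}$ have controlled traces there, and one must verify that the quasi-interpolant can be chosen to preserve coarse boundary values so that no spurious edge contributions survive outside $\Rset{1}$. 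Once these two points are settled, the remaining estimates are the routine scaling and Cauchy--Schwarz arguments already exhibited in Section~\ref{se:equiv}.
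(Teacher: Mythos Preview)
Your direct argument is correct and essentially reconstructs the proof of the discrete reliability for the standard residual estimator (Casc\'on--Kreuzer--Nochetto--Siebert, cited in the paper as~\cite[Theorem~4.1]{MR2324418}), adapted so that the partition of unity $\sum_\nu\phi_\nu$ reorganizes the element and jump residuals into the patchwise quantities $\tilde\eta^{res}_\nu$. The paper, by contrast, does not redo this: it simply invokes the cited result to obtain $\vertiii{u_\ell-u_m}\lesssim\eta^{res}(u_\ell,\Rset{1})+\osc^\patchsymbol(u_\ell,\Rset{1})$ for the \emph{standard} residual estimator and then observes that the extension to $\tilde\eta^{res}$ is immediate, since for every $T\in\Rset{1}$ and every vertex $\nu\in\mathcal V_T$ one has $\eta^{res}(u_\ell,T)\le\eta^{res}(u_\ell,\omega_\nu)=\tilde\eta^{res}_\nu(u_\ell)$, so that $\eta^{res}(u_\ell,\Rset{1})\le\tilde\eta^{res}(u_\ell,\Rset{1})$ trivially. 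Your route is self-contained and makes the localization mechanism explicit; the paper's route is a two-line reduction to a known theorem. Two minor remarks on your version: first, the oscillation term is in fact not needed here (the cited discrete reliability for $\eta^{res}$ holds without it), so your claim that it ``absorbs $f-\Pi^{k-1}f$'' is unnecessary though harmless on the upper-bound side; second, your worry about spillover is most cleanly resolved by using the Lagrange interpolant on $\tri{\ell}$ (as the paper does in Section~\ref{se:optimalfleur}) rather than Scott--Zhang, since $u_m-\mathcal I_{\tri{\ell}}u_m$ then vanishes identically on every unrefined element, giving exact localization to $\Rset{1}$ without any layer bookkeeping.
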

\begin{proof}
It is well-known (see~\cite[Theorem~4.1]{MR2324418}) that the discrete reliability properties holds true for the standard residual error estimator, that is
\[
\|| u_\ell-u_m \|| \lesssim 
    \eta^{res}
    (u_\ell,\Rset{1})+
    \osc^\patchsymbol(u_\ell,\Rset{1}).
\]
Clearly, the extension to $\tilde\eta^{res}$ is straightforward.
\end{proof}

\begin{Lemma}[H4 --- discrete efficiency for $\tilde\eta^{res}$]
Let $\tri{m}$ be a refinement of $\tri{\ell}$ and let $j^\star$ be the index introduced in Lemma~\ref{le:j*}. Then it holds
\[
\tilde\eta^{res} (u_\ell,\Rset{j^\star}) \lesssim 
  \|| u_\ell -u_m \|| +\osc^\patchsymbol(u_\ell,\Rset{1}).
\]
\label{le:H4}
\end{Lemma}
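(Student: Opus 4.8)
The plan is to establish the discrete lower bound by the bubble-function technique of Verf\"urth, adapted to the discrete (AFEM) setting as in~\cite{MR2324418}. The mechanism that makes everything work is the interior node property of Lemma~\ref{le:j*}: after $j^\star$ bisections every element and every edge occurring in the patches $\omega_\nu$ with $\nu\in\mathcal V_T$, $T\in\Rset{j^\star}$, carries an interior node which is a vertex of $\tri{m}$. Consequently the discrete element and edge bubbles attached to these interior nodes are nodal functions of the fine mesh and therefore lie in $V_m$; this is exactly what allows us to use them as test functions in the Galerkin problem on $\tri{m}$. The starting observation is that, since $V_\ell\subset V_m$ and both $u_\ell$ and $u_m$ are Galerkin solutions, for every $w\in V_m$
\[
\langle R(u_\ell),w\rangle=(f,w)-(\nabla u_\ell,\nabla w)=(\nabla u_m,\nabla w)-(\nabla u_\ell,\nabla w)=(\nabla(u_m-u_\ell),\nabla w),
\]
so that $|\langle R(u_\ell),w\rangle|\le\vertiii{u_m-u_\ell}\,\|\nabla w\|$. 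Expanding $\langle R(u_\ell),w\rangle=\sum_T(R_T(u_\ell),w)_T+\sum_E\langle J_E(u_\ell),w\rangle_E$ reduces the task to choosing, for each element and each edge, a test function $w\in V_m$ that isolates the corresponding residual.

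First I would bound the element residuals. For an element $T$ in one of the relevant patches let $b_T\in V_m$ be the hat function of $\tri{m}$ at the interior node of $T$ furnished by Lemma~\ref{le:j*}; it is supported in $T$ and vanishes on $\partial T$, its support has measure comparable to $|T|$ (with a constant depending on $j^\star$), and the products $b_T\,p$ with $p\in\mathcal P^{k-1}(T)$ again belong to $V_m$. Testing with $w=h_T^2\,b_T\,\Pi^{k-1}_T(f+\Delta u_\ell)$, only the element term on $T$ survives in the expansion since $w$ vanishes on $\partial T$, and the standard bubble norm-equivalence and inverse estimate yield
\[
h_T\,\|\Pi^{k-1}_T(f+\Delta u_\ell)\|_{0,T}\lesssim\vertiii{u_m-u_\ell}_T+h_T\,\|f-\Pi^{k-1}_T f\|_{0,T}.
\]
Because $\Delta u_\ell$ is already a polynomial of degree at most $k-2$, the difference between the full residual $f+\Delta u_\ell$ and its projection is exactly $f-\Pi^{k-1}_T f$, so a triangle inequality upgrades the left-hand side to $h_T\|R_T(u_\ell)\|_{0,T}$ at the cost of the same oscillation term. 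Since $|\phi_\nu|\le1$, the same bound holds a fortiori for the weighted residual $R_{\nu,T}$.

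Next I would bound the jump residuals. For an interior edge $E$ shared by $T_1,T_2$, let $b_E\in V_m$ be the discrete edge bubble attached to the interior node of $E$ (again Lemma~\ref{le:j*}), supported in $T_1\cup T_2$ and vanishing on the remaining boundary. Testing with $w=h_E\,b_E\,\widetilde J$, where $\widetilde J$ is a polynomial extension of $J_E(u_\ell)$ into $T_1\cup T_2$, the expansion now retains the edge term on $E$ together with the two element terms on $T_1,T_2$; moving the latter to the right and invoking the element bound just proved gives
\[
h_E^{1/2}\,\|J_E(u_\ell)\|_{0,E}\lesssim\vertiii{u_m-u_\ell}_{T_1\cup T_2}+\sum_{i=1,2}h_{T_i}\,\|f-\Pi^{k-1}_{T_i}f\|_{0,T_i}.
\]
Summing the element and edge estimates over all elements and edges of the patches $\omega_\nu$ with $\nu\in\mathcal V_T$, $T\in\Rset{j^\star}$, and using that by shape-regularity each element of $\tri{\ell}$ belongs to a uniformly bounded number of such patches, yields $\tilde\eta^{res}(u_\ell,\Rset{j^\star})\lesssim\vertiii{u_m-u_\ell}+\osc^\patchsymbol(u_\ell,\Rset{j^\star})$. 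Finally $\Rset{j^\star}\subseteq\Rset{1}$ gives $\osc^\patchsymbol(u_\ell,\Rset{j^\star})\le\osc^\patchsymbol(u_\ell,\Rset{1})$, which is the claim.

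The main obstacle is the construction and analysis of the discrete bubbles. Unlike the continuous Verf\"urth argument, one cannot use the polynomial interior and edge bubbles, which in general do not belong to $V_m$; one must instead rely on the nodal basis functions of $\tri{m}$ at the interior nodes guaranteed by Lemma~\ref{le:j*} and verify that the weighted norm-equivalence $(p,b_T p)_T\gtrsim\|p\|_{0,T}^2$ on $\mathcal P^{k-1}(T)$, the trace and extension estimates for $b_E$, and the inverse estimates all hold with constants depending only on the shape-regularity of $\tri{m}$ and on the fixed number $j^\star$. One must also keep the oscillation bookkeeping strictly patchwise and check that the finite overlap of the patches keeps every summation constant independent of the mesh.
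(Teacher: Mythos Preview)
Your approach is correct and conceptually the same as the paper's: both reduce the patchwise estimate to the element- and edge-wise discrete efficiency of the residual estimator (which hinges on the interior node property of Lemma~\ref{le:j*}) and then sum over patches with finite overlap. The only difference is that the paper invokes the elementwise bound $\eta^{res}(u_\ell,T')\lesssim\vertiii{u_\ell-u_m}_{\omega_{T'}}+\|h(id-\Pi^{k-1})f\|_{\omega_{T'}}$ as a known black box, whereas you sketch its proof via discrete bubbles in $V_m$; your version is therefore more self-contained, at the price of having to justify the norm-equivalences for the discrete bubbles (which you correctly flag as the main technical point).

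One small bookkeeping slip: your intermediate claim that the accumulated oscillation is bounded by $\osc^\patchsymbol(u_\ell,\Rset{j^\star})$ is not quite right. The edge estimate for an edge $E$ on the boundary of $\omega_\nu$ produces an oscillation term on the neighboring element $T''$ across $E$, and $T''$ need not lie in any $\omega_{\nu'}$ with $\nu'\in\mathcal V_T$, $T\in\Rset{j^\star}$. The paper handles this by observing that the enlarged neighborhood $\tilde\omega_T=\{T':T'\cap\omega_T\neq\emptyset\}$ is contained in $\Rset{1}$: since Lemma~\ref{le:j*} puts an interior $\tri{m}$-vertex on $E$, conformity forces $T''$ to be refined. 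With this observation your final bound by $\osc^\patchsymbol(u_\ell,\Rset{1})$ is correct, but the justification is $\tilde\omega_T\subset\Rset{1}$, not merely $\Rset{j^\star}\subset\Rset{1}$.
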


\begin{proof}
From the definition of $j^\star$ we have that if $T$ belongs to $\Rset{j^\star}$ then $T$ and its edges have the interior node property. Moreover, $ \tilde\omega_T:=\{ T^\prime \ : \ T^\prime\cap \omega_T \neq 0 \}$ is contained in $\Rset{1}$.

Therefore, we can use the fact that the standard residual estimator $\eta^{res}$ is discretely efficient, that is,
\[
    \eta^{res}(u_\ell,T) \lesssim\vertiii{u_\ell - u_m}_{\omega_T} +
    \vertiii{(id-\Pi^{k-1}_T) f}_{\omega_T}.
\]
It follows
\begin{align*}
    &\left(\tilde\eta^{res}(u_\ell,\Rset{j^\star})\right)^2
    =\sum\limits_{T \in \Rset{j^\star}}
    \sum\limits_{\nu \in \mathcal V_\nu }
    (\eta^{res}(u_\ell,\omega_\nu)   )^2
    \\
    &\qquad=\sum\limits_{T \in \Rset{j^\star}}
    \sum\limits_{\nu \in \mathcal V_\nu }
    \sum\limits_{T^\prime \in \omega_\nu}
    \left((\eta^{res}(u_\ell,T^\prime)   )^2
     +
    \| h_T(id- \Pi^{k-1}_T)(f) \|_{T^\prime}^2\right)\\
    &\qquad \lesssim\sum\limits_{T \in \Rset{j^\star}}
    \sum\limits_{\nu \in \mathcal V_\nu }
    \sum\limits_{T^\prime \in \omega_\nu}
    \left(\vertiii{u_\ell-u_m}_{T^\prime}
     +
    \| h_T(id- \Pi^{k-1}_T)(f) \|_{\omega_{T^\prime}}^2\right)
    \\
   &\qquad\lesssim
     \vertiii{u_\ell -u_m}_{\Omega} +
     \sum\limits_{T \in \Rset{j^\star}}
     \osc^\patchsymbol(u_\ell,\tilde \omega_T)\\
    &\qquad\lesssim
     \vertiii{u_\ell -u_m}_{\Omega} +\osc^\patchsymbol(u_\ell,\Rset{1}) .
\end{align*}
\end{proof}
\begin{figure}
\begin{minipage}[l]{0.4\textwidth}
\begin{tikzpicture} 
\draw[gray, thick] (0,0) -- (4,0) -- (2.5,2) -- (0,0); 
\draw[red, thick] (1.25,1) -- (2,0); 
\draw[red, thick] (2,0) -- (2.5,2); 
\draw[red, thick] (3.25,1) -- (2,0); 
\end{tikzpicture}
\caption{$T\in\Rset{2}$}
\label{fig:R2}
\end{minipage}
\begin{minipage}[l]{0.4\textwidth}
\begin{tikzpicture} 
\draw[gray, thick] (0,0) -- (4,0) -- (2.5,2) -- (0,0); 
\draw[red, thick] (1.25,1) -- (2,0); 
\draw[red, thick] (2,0) -- (2.5,2); 
\draw[red, thick] (3.25,1) -- (2,0); 
\draw[blue, thick] (1.25,1) -- (3.25,1); 
\draw[blue, thick] (1,0) -- (1.25,1); 
\draw[blue, thick] (3,0) -- (3.25,1); 
\end{tikzpicture}
\caption{$T\in\Rset{3}$}
\label{fig:R3}
\end{minipage}
\begin{minipage}[l]{0.4\textwidth}
\begin{tikzpicture} 
\draw[gray, thick] (0,0) -- (4,0) -- (2.5,2) -- (0,0); 
\draw[red, thick] (1.25,1) -- (2,0); 
\draw[red, thick] (2,0) -- (2.5,2); 
\draw[red, thick] (3.25,1) -- (2,0); 
\draw[blue, thick] (1.25,1) -- (3.25,1); 
\draw[blue, thick] (1,0) -- (1.25,1); 
\draw[blue, thick] (3,0) -- (3.25,1); 
\draw[green, thick] (0.625,0.5) -- (1,0); 
\draw[green, thick] (1,0) -- (2.825,1.5); 
\draw[green, thick] (3,0) -- (1.825,1.5); 
\draw[green, thick] (3.625,0.5) -- (3,0);
\end{tikzpicture}
\caption{$T\in\Rset{4}$}
\label{fig:R4}
\end{minipage}
\end{figure}
The next lemma is related to the oscillation reduction stated in condition [H5]. For completeness, we show the condition met both by the standard oscillation term and by the patchwise oscillation; in our analysis we are going to use the latter one.

\begin{Lemma}[H5 --- Oscillation reduction]
Let $\tri{m}$ be a refinement of $\tri{\ell}$ and let $j^\star$ be the index introduced in Lemma~\ref{le:j*}. Then it holds
\begin{align}
    \osc_{\tri{m}} (f,\tri{m})^2
    \leq
    \osc_{\tri{l}} (f,\tri{l})^2
    -
    \lambda
    \osc_{\tri{l}} (f,\Rset{1})^2
    \label{eq:osc1}
\end{align}
and
\begin{align}
    \osc^\patchsymbol_{\tri{m}} (f,\tri{m})^2
    \leq
    \osc^\patchsymbol_{\tri{l}} (f,\tri{l})^2
    -
    \lambda
    \osc^\patchsymbol_{\tri{l}} (f,\Rset{{j^\star}})^2.
    \label{eq:osc2}
\end{align}
\label{le:osc}
\end{Lemma}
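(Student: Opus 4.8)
The plan is to reduce both inequalities to a single elementwise contraction estimate and then sum, handling the two oscillation norms in parallel.

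\emph{The engine.} First I would establish the local reduction: if $T\in\tri{\ell}$ is bisected at least once, with children $T'\subset T$ in $\tri{m}$, then
\[
\sum_{T'\subset T}\|h_{T'}(f-\Pi^{k-1}_{T'}f)\|_{0,T'}^2\le\tfrac12\,\|h_T(f-\Pi^{k-1}_T f)\|_{0,T}^2 .
\]
Two ingredients give this. Using the area-based mesh-size $h_T=|T|^{1/2}$, which is equivalent to the diameter by shape-regularity, a single newest-vertex bisection halves the area, so $h_{T'}^2\le\tfrac12 h_T^2$ for every child. And since $\mathbb{P}^{k-1}(T)|_{T'}\subset\mathbb{P}^{k-1}(T')$, the best-approximation property of the $L^2$-projection yields $\|f-\Pi^{k-1}_{T'}f\|_{0,T'}\le\|f-\Pi^{k-1}_T f\|_{0,T'}$; summing the squares over the children recovers $\|f-\Pi^{k-1}_T f\|_{0,T}^2$. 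Combining the two produces the factor $\tfrac12$, while an unrefined $T$ keeps its contribution unchanged.

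\emph{First inequality.} I would split $\tri{\ell}$ into the bisected elements, which are exactly $\Rset{1}$, and the untouched ones, which survive verbatim in $\tri{m}$. Summing the local estimate over $\Rset{1}$ and adding the unchanged contributions gives
\[
\osc_{\tri{m}}(f,\tri{m})^2\le\osc_{\tri{\ell}}(f,\tri{\ell})^2-\tfrac12\,\osc_{\tri{\ell}}(f,\Rset{1})^2,
\]
which is~\eqref{eq:osc1} with $\lambda=\tfrac12$.

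\emph{Patch inequality.} I would rewrite each patch oscillation as a weighted sum of element oscillations, $\osc^\patchsymbol(f,\cdot)^2=\sum_{T'}\mu_{T'}\,\|h_{T'}(f-\Pi^{k-1}_{T'}f)\|_{0,T'}^2$, where the multiplicity $\mu_{T'}=\sum_{\nu\in\mathcal V_{T'}}\#\{T:\nu\in\mathcal V_T\}$ is bounded above and below by shape-regularity constants. Applying the local reduction again makes this weighted sum contract on every bisected element. Here Lemma~\ref{le:j*} is decisive: for $T\in\Rset{j^\star}$ every triangle in every patch $\omega_\nu$, $\nu\in\mathcal V_T$, already carries an interior node and hence lies in $\Rset{1}$, so each element oscillation entering $\osc^\patchsymbol_{\tri{\ell}}(f,\Rset{j^\star})^2$ is produced by a genuinely bisected element and therefore contracts. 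This is precisely the mechanism depicted in Figures~\ref{fig:R2}--\ref{fig:R4} that forces the use of $\Rset{j^\star}$ rather than $\Rset{1}$.

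\emph{The main obstacle.} The hard part is bookkeeping the patches themselves: passing to $\tri{m}$ replaces the old patches by new ones, and an unrefined element on the boundary of the refined region can gain extra neighbours, since new vertices appear on the edges of adjacent bisected triangles, so its multiplicity $\mu_{T'}^{(m)}$ may exceed $\mu_{T'}^{(\ell)}$. Controlling these weight increases is exactly where the buffer supplied by Lemma~\ref{le:j*} is needed: because $\Rset{j^\star}$ is surrounded by a full layer of refined elements, the contraction harvested on that layer dominates the weight growth on the untouched fringe, leaving a net decrease proportional to $\osc^\patchsymbol_{\tri{\ell}}(f,\Rset{j^\star})^2$. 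Tracking these overlaps directly, rather than invoking the global equivalence $\osc^\patchsymbol\simeq\osc$ which would spoil the contraction constant, yields~\eqref{eq:osc2} for a suitable $\lambda\in\,]0,1[$.
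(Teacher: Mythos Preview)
Your approach matches the paper's: both proofs rest on the same elementwise contraction (mesh-size reduction by a fixed factor $\gamma<1$ combined with the best-approximation property of the $L^2$ projection), then split into refined and unrefined elements for~\eqref{eq:osc1}, and for~\eqref{eq:osc2} invoke the key consequence of Lemma~\ref{le:j*} that $T\in\Rset{j^\star}$ forces every element of $\omega_T$ to lie in $\Rset{1}$. The paper keeps a generic $\gamma$ rather than your explicit $\lambda=\tfrac12$ from the area-based mesh size, and its patch bookkeeping is terser than your multiplicity discussion, but the underlying argument is the same.
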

\begin{proof}
The first statement is equivalent to 
\begin{align}
    \osc_{\tri{m}} (f,\tri{m}^\star)^2
    \leq
    (1
    -
    \lambda)
    \osc_{\tri{l}} (f,\Rset{1})^2
\end{align}
where $\tri{m}^\star = \tri{m}\backslash \tri{l}$. %and 
%$\tri{l}^\star = \tri{l}\backslash \tri{m}$.

% define notation
\newcommand{\T}{\mathfrak{ T}}
Consider a triangle $T_m$ in $\tri{m}^\star$ which originates from the triangle $\T_\ell(T_m)$ in $\tri{\ell}$. 
Our refinement strategy guarantees that the mesh size is reduced, so that $h_{T_m}\le\gamma h_{\T_\ell(T_m)}$ for a positive $\gamma<1$.
%It holds due to the reduction factor of $h$ the triangle when refining: if we assume that if $T_m$ is a refined triangle from $\T_l(T_m)$, 
%$h_{\T_l(T_m)} \geq \gamma h_{T_m}$.
Then, it holds
\begin{align*}
    \osc_{\tri{m}} (f,\tri{m}^\star)^2
    &= 
    \sum\limits_{T_m \in \tri{m}^\star}
    \| h_{T_m} (f-\Pi^{k-1}_{T_m} f) \|^2_{0,T_m}\\&
    \leq 
    \sum\limits_{T_m \in \tri{m}^\star}
    (\gamma h_{\T_l(T_m)})^2 \|  (f-\Pi^{k-1}_{T_m} f) \|^2_{0,T_m}\\&
    =
    \sum\limits_{T_l \in \Rset{1}}
    \sum\limits_{T_m \in \tri{m}^\star , \T_l(T_m) = T_l }
    (\gamma h_{\T_l(T_m)})^2 \|  (f-\Pi^{k-1}_{T_m} f) \|^2_{0,T_m}\\&
    \leq
    \sum\limits_{T_l \in \Rset{1}}
    (\gamma h_{T_l})^2 \|  (f-\Pi^{k-1}_{T_l} f) \|^2_{0,T_l}
    \\&
    \leq
    \gamma^2 \osc_{\tri{l}} (f,\Rset{1})^2.
\end{align*}
So we have~\eqref{eq:osc1} with $\lambda = 1-\gamma^2$.

The second statement is equivalent to
\begin{align*}
 \osc^\patchsymbol_{\tri{m}} (f,\tri{m}^\star)^2
    &\leq
    \osc^\patchsymbol_{\tri{l}} (f,\Rset{1})^2
    -
    \lambda
    \osc^\patchsymbol_{\tri{l}} (f,\Rset{{j^\star}})^2 \\
    &\leq
    \osc^\patchsymbol_{\tri{l}} (f,\Rset{1} \backslash\Rset{{j^\star}})^2
    +(1
    -
    \lambda)
    \osc^\patchsymbol_{\tri{l}} (f,\Rset{{j^\star}})^2.
\end{align*}
Recall that the definition of $j^\star$ implies that for any $T$ in $\Rset{j^\star}$ all the triangles in $\omega_T$
belong to $\Rset{1}$. Therefore,
\begin{align*}
    &\osc^\patchsymbol_{\tri{m}} (f,\tri{m}^\star)^2
    = 
    \sum\limits_{T^\prime_m  \in \tri{m}^\star}
    \sum\limits_{T_m \in \omega_{T^\prime_m }}
    \| h_{T_m} (f-\Pi^{k-1}_{T_m} f) \|^2_{0,T_m}\\&\qquad
    \leq 
    \sum\limits_{T^\prime_m  \in \tri{m}^\star}
    \sum\limits_{T_m \in \omega_{T^\prime_m }}
    (\gamma h_{\T_l(T_m)})^2 \|  (f-\Pi^{k-1}_{T_m} f) \|^2_{0,T_m}\\&\qquad
    =
    \sum\limits_{T_l \in \Rset{1}}
    \sum\limits_{T^\prime_m \in \tri{m}^\star , \T_l(T^\prime_m) = T_l }
    \sum\limits_{T_m \in \omega_{T^\prime_m }}
    (\gamma h_{\T_l(T_m)})^2 \|  (f-\Pi^{k-1}_{T_m} f) \|^2_{0,T_m}\\&\qquad
    \leq
    \sum\limits_{T_l \in \Rset{1}\backslash \Rset{j^\star} }
    (\gamma h_{T_l})^2 \|  (f-\Pi^{k-1}_{T_l} f) \|^2_{0,T_l}
    +
    \gamma^2 \osc^\patchsymbol_{\tri{l}} (f,\Rset{j^\star})^2.
\end{align*}
\end{proof}
We are now in the position of stating our main result concerning the convergence of AFEM based on the Braess--Sch\"oberl error estimator.

\begin{Theorem}
Let $u$ be the solution of Problem~\eqref{eq:variational} and consider a SOLVE--ESTIMATE--MARK--REFINE strategy satisfying the following properties.
\begin{enumerate}
    \item In the solve module the solution is computed exactly. 
    \item The estimate module makes use of the Braess--Sch\"oberl error estimator $\eta^{\patchsymbol{}}$ defined on patches and takes into account the total error~\eqref{eq:toterr} with the patchwise oscillation term $\osc^\patchsymbol{}$.
    \item The mark module is the usual D\"orfler marking strategy.
    \item The refine module is performed using the newest vertex bisection algorithm and it is slightly modified from the standard routines, as described in~\cite{MR2875241}, using the iteration counter $j^\star$ defined in Lemma~\ref{le:j*}, so that the interior node property is satisfied.
\end{enumerate}
Then the sequence of discrete solutions $\{u_\ell\}$ converges to $u$ with the quasioptimal decay rate
\[
\|u-u_\ell\|_{1,\Omega}+\osc^{\patchsymbol{}}(u_\ell,\tri{\ell})\lesssim (\#\tri{\ell}-\#\tri{0})^{-s}|(v,f,\data)|_\AS
\]
(see~\eqref{eq:optimality}).
\label{th:mainstar}
\end{Theorem}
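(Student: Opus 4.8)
The plan is to recognize that this theorem is, by design, a corollary of the abstract machinery of~\cite{MR2875241}: once the five structural hypotheses [H1]--[H5] have been verified for the pair $(\eta^{\patchsymbol{}},\osc^\patchsymbol{})$, the abstract contraction result (Theorem~5.1 there) yields~\eqref{eq:contraction} and the abstract quasioptimality result (Theorem~6.6 there) yields~\eqref{eq:optimality}, which is precisely the claimed decay rate (recalling that the $H^1$-seminorm $\vertiii{\cdot}$ is equivalent to the $H^1_0(\Omega)$-norm). Since the four modules prescribed in the statement are exactly those assumed in~\cite{MR2875241} --- exact solve, D\"orfler marking, and newest-vertex bisection with the interior-node refinement driven by the counter $j^\star$ of Lemma~\ref{le:j*} --- the assumptions on the \emph{design of the adaptive method} and on the \emph{marking strategy} are automatically in force, so the only work left is to check the four estimator assumptions and the oscillation reduction.

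First I would record that [H1] and [H2] hold verbatim from the global reliability~\eqref{eq:BSrel} and the global efficiency~\eqref{eq:BSeff} of the Braess--Sch\"oberl estimator, taking the patchwise oscillation $\osc^\patchsymbol{}$ as the oscillation term. For [H3] and [H4] I would route everything through the patchwise residual estimator $\tilde\eta^{res}$: Lemma~\ref{le:H3} supplies discrete reliability and Lemma~\ref{le:H4} supplies discrete efficiency for $\tilde\eta^{res}$, both at the refinement level $j^\star$ furnished by Lemma~\ref{le:j*}. The decisive step is then to transfer these two bounds from $\tilde\eta^{res}$ to $\eta^{\patchsymbol{}}$ by invoking the local equivalence of Lemma~\ref{lem:reseqrec}, summing the nodewise estimates~\eqref{eq:lesssim} and~\eqref{eq:gtrsim} over the appropriate refined set. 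Finally, [H5] is exactly the patchwise oscillation reduction~\eqref{eq:osc2} proved in Lemma~\ref{le:osc}, with the constant $\lambda\in\,]0,1[$ produced there.

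The main obstacle is the careful bookkeeping of oscillation terms while transferring [H3] and [H4] through Lemma~\ref{lem:reseqrec}. The equivalence between $\eta^{\patchsymbol{}}_\nu$ and $\tilde\eta^{res}_\nu$ holds only up to the extra term $\sum_{T\in\omega_\nu}\|h_T(id-\Pi^{k-1}_T)f\|_T$, so when I substitute $\eta^{\patchsymbol{}}$ for $\tilde\eta^{res}$ in the localized bounds I must verify that these additional contributions are absorbed into $\osc^\patchsymbol{}(u_\ell,\Rset{j^\star})$ rather than leaving an uncontrolled remainder --- this is exactly the point the paper flags as requiring particular care. Here the interior-node property of Lemma~\ref{le:j*} is essential: it guarantees that for every $T\in\Rset{j^\star}$ the enveloping patches lie inside $\Rset{1}$, so that the oscillation support coincides with the refined region and the abstract theory goes through with $\osc^\patchsymbol{}(\cdot,\Rset{j^\star})$ in place of $\osc^\patchsymbol{}(\cdot,\Rset{1})$, as anticipated in the Remark following [H4]. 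Once this matching of estimators and oscillations is secured, [H1]--[H5] are all in force and the conclusion follows by the direct application of Theorems~5.1 and~6.6 of~\cite{MR2875241}.
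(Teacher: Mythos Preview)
Your proposal is correct and follows essentially the same route as the paper: verify [H1]--[H5] for the pair $(\eta^{\patchsymbol{}},\osc^\patchsymbol{})$ and then invoke the abstract contraction and quasioptimality theorems of~\cite{MR2875241}, with [H1]--[H2] coming from~\eqref{eq:BSrel}--\eqref{eq:BSeff}, [H3]--[H4] obtained by combining Lemmas~\ref{le:H3}--\ref{le:H4} for $\tilde\eta^{res}$ with the local equivalence of Lemma~\ref{lem:reseqrec}, and [H5] from~\eqref{eq:osc2} in Lemma~\ref{le:osc}. Your additional paragraph on the oscillation bookkeeping when transferring [H3]--[H4] through Lemma~\ref{lem:reseqrec} is more explicit than the paper's own proof, which simply asserts that the equivalence ``leads directly'' to the localized bounds.
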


\begin{proof}

As explained above, we need to show that the five conditions H1-H5 are satisfied.
We have already observed that H1 (global reliability) and H2 (global efficiency) are proved in~\cite[Theorems 9.4 and 9.5]{Bra:13} (see~\eqref{eq:BSrel} and~\eqref{eq:BSeff}).

The equivalence between the estimator $\eta^{\patchsymbol{}}$ and the patchwise residual estimator $\tilde\eta^{res}$ (see Section~\ref{se:equiv}), together with Lemmas~\ref{le:H3} and~\ref{le:H4}, leads directly to the localized bounds H3 (discrete reliability) and H4 (discrete efficiency) for the estimator $\eta^{\patchsymbol{}}$. Finally, [H5] (oscillation reduction) has been proved in Lemma~\ref{le:osc} (see Equation~\eqref{eq:osc2}).
\end{proof}

\begin{Remark}
Another way to prove a result analogue to the one presented in Theorem~\ref{th:mainstar} would be to use the theory developed in~\cite{MR2776915}. In such theory, the refine module is not modified from the standard routines, while the mark module acts on patches instead of on single elements. Unfortunately, the theory of~\cite{MR2776915} assumes that the oscillations are dominated by the error estimator, which might not be true in our case; a possible fix would be the use of a separate marking strategy as in~\cite{MR3719030}.
\end{Remark}

\section{Optimal convergence rate for $\eta^\Delta$}
\label{se:optimalfleur}

In this section we see how the results of the previous section can be extended to $\eta^\Delta$, which is the error estimator usually referred to as Braess--Sch\"oberl estimator.

%Since now the estimator is constructed element by element, we revert to the standard oscillation term
%
%\begin{align*}
%\osc(f,{\tri{}})  =
%\left(
%\sum
%\limits_{T\in \mathcal T}
%\| h_{T}(f-\Pi^{k-1}_T f) \|_{0,T}^2
%\right)^{1/2}.
%\end{align*}

Even if the estimator is constructed element by element, we keep using the oscillation term $\osc^\patchsymbol{}(u_\ell,\tri{\ell})$ defined on patches of elements. This is needed, in particular, for the proof of the discrete efficiency (see Lemma~\ref{le:eureka}).

It is clear from the above discussion that, in order to apply the theory of~\cite{MR2875241}, the two crucial properties are H3 (discrete reliability) and H4 (discrete efficiency). We are not going to use the equivalence with any residual-type error estimator, but we are showing these properties directly in the next two lemmas.

\begin{Lemma}[Discrete Reliability]
Let $\tri{m}$ be a refinement of $\tri{\ell}$, then
\[
\vertiii{u_\ell -u_m}\lesssim 
 \eta^\Delta (u_\ell,\Rset{1})
  +\osc_{\tri{}}(u_\ell,\Rset{1}).
\]
\end{Lemma}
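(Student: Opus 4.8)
The plan is to reduce the estimate, via Galerkin orthogonality, to the residual tested against a discrete function, and then to trade that residual for the equilibrated flux $\qdelta{}(u_\ell)$. First I would set $w:=u_m-u_\ell$ and observe that, since the conforming finite element spaces are nested, $V_\ell\subseteq V_m$, so $w\in V_m$. Testing the discrete problem~\eqref{eq:variational_h} on $\tri{m}$ with $w$ and subtracting $(\nabla u_\ell,\nabla w)$ gives the Galerkin identity
\[
\vertiii{w}^2=(\nabla(u_m-u_\ell),\nabla w)=(f,w)-(\nabla u_\ell,\nabla w)=\langle R(u_\ell),w\rangle,
\]
with $R(u_\ell)$ the residual functional of Section~\ref{se:equiv}. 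Because $u_\ell$ is the Galerkin solution on $\tri{\ell}$, one has $\langle R(u_\ell),v_\ell\rangle=0$ for every $v_\ell\in V_\ell$; taking $v_\ell=I_\ell w$ with $I_\ell$ a Scott--Zhang-type quasi-interpolation onto $V_\ell$, I would rewrite $\vertiii{w}^2=\langle R(u_\ell),w-I_\ell w\rangle$.

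The second step converts the residual into the flux. Integration by parts on each element, combined with the equilibration conditions~\eqref{eq:fluxrecocond} (the elementwise divergence $\div\qdelta{}=-\Pi^k f-\Delta u_\ell$ and the jump relation on interior edges), yields for every $\phi\in H^1_0(\Omega)$ the identity
\[
\langle R(u_\ell),\phi\rangle=(\qdelta{}(u_\ell),\nabla\phi)+(f-\Pi^k f,\phi).
\]
Applying this with $\phi=w-I_\ell w$ and using that $f-\Pi^kf$ is $L^2$-orthogonal to constants on each element (so that the elementwise mean of $w-I_\ell w$ may be subtracted, and $\|f-\Pi^kf\|_{0,T}\le\|f-\Pi^{k-1}f\|_{0,T}$ while the mean subtraction supplies the factor $h_T$), a Cauchy--Schwarz estimate localized to the support $\tilde{\mathcal R}$ of $w-I_\ell w$ gives
\[
\vertiii{w}^2\lesssim\bigl(\eta^\Delta(u_\ell,\tilde{\mathcal R})+\osc_{\tri{}}(u_\ell,\tilde{\mathcal R})\bigr)\,\|\nabla(w-I_\ell w)\|_{0,\Omega}.
\]
The $H^1$-stability of $I_\ell$ bounds $\|\nabla(w-I_\ell w)\|_{0,\Omega}\lesssim\vertiii{w}$, and dividing by $\vertiii{w}$ produces an inequality of the required form, but with the enlarged set $\tilde{\mathcal R}$ in place of $\Rset{1}$.

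The hard part will be passing from $\tilde{\mathcal R}$ to $\Rset{1}$. On any coarse element whose entire interpolation patch is untouched by the refinement, $w$ already coincides with a function of $V_\ell$, so $I_\ell$ reproduces it and $w-I_\ell w$ vanishes there; hence $\tilde{\mathcal R}$ is exactly $\Rset{1}$ together with its one-ring of neighbours, a fixed enlargement whose cardinality is controlled by shape-regularity. The genuine difficulty, specific to $\eta^\Delta$, is that this estimator is assembled from the full flux $\qdelta{}(u_\ell)=\sum_{\nu\in\cV}\qdelta{\nu}(u_\ell)$, whose element contributions superpose several vertex patches, so that the contributions on the unrefined neighbours of $\Rset{1}$ cannot simply be dropped (this is precisely why the local equivalence with a residual estimator, available for $\eta^{\patchsymbol{}}$, is unavailable here). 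I would control these neighbouring terms through the patch decomposition: each $\qdelta{\nu}(u_\ell)$ is supported on $\patch{\nu}$ and is determined only by the local residual data on $\patch{\nu}$, and the patches have finite overlap, so the flux restricted to the first ring around $\Rset{1}$ is governed by the same local residual data that already enters $\eta^\Delta(u_\ell,\Rset{1})$ up to oscillation; the analogous remark for $\osc_{\tri{}}(u_\ell,\tilde{\mathcal R})$ reduces it to $\osc_{\tri{}}(u_\ell,\Rset{1})$. Absorbing this bounded enlargement into the hidden constant, exactly as in the residual discrete reliability of~\cite{MR2324418} invoked in Lemma~\ref{le:H3}, yields the stated bound with $\Rset{1}$.
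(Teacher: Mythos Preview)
Your reduction to the residual, the conversion of the residual into the equilibrated flux plus an oscillation term, and the use of Cauchy--Schwarz are all fine and are essentially the same ideas the paper uses. The gap is in the localization. By choosing a Scott--Zhang quasi-interpolant $I_\ell$ you only obtain $w-I_\ell w=0$ on coarse elements whose full interpolation patch is unrefined, so the support $\tilde{\mathcal R}$ genuinely spills over into a ring of coarse neighbours of $\Rset{1}$. Your attempt to absorb this ring into the constant does not work for $\eta^\Delta$: for a neighbour $T'\notin\Rset{1}$ the local indicator $\eta^\Delta_{T'}=\|\qdelta{}\|_{0,T'}$ involves the patch fluxes $\qdelta{\nu}$ for every vertex $\nu$ of $T'$, and for the vertex of $T'$ opposite the shared edge the patch $\patch{\nu}$ may lie entirely outside $\Rset{1}$. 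The corresponding $\qdelta{\nu}$ is then determined by residual data that has nothing to do with $\Rset{1}$ and is in no way dominated by $\eta^\Delta(u_\ell,\Rset{1})$ or $\osc_{\tri{}}(u_\ell,\Rset{1})$. So the claim that ``the flux restricted to the first ring around $\Rset{1}$ is governed by the same local residual data that already enters $\eta^\Delta(u_\ell,\Rset{1})$'' is simply false.

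The paper avoids the whole issue by replacing the Scott--Zhang operator with the nodal Lagrange interpolant $\mathcal I_{\tri{\ell}}$ onto $V_\ell$. This is legitimate because $u_m$ is a continuous piecewise polynomial, so point values are well defined; and it buys exact localization: on every unrefined element $T\in\tri{\ell}\cap\tri{m}$ the restriction $u_m|_T$ is already a polynomial of degree $\le k$, whence $\mathcal I_{\tri{\ell}}u_m|_T=u_m|_T$. Consequently $u_m-\mathcal I_{\tri{\ell}}u_m$ is supported exactly on $\Rset{1}$ (and vanishes on $\partial\Rset{1}$), and the Cauchy--Schwarz step lands directly on $\Rset{1}$ with no ring to control. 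The required stability $\|\nabla(u_m-\mathcal I_{\tri{\ell}}u_m)\|_{0,T}\lesssim\|\nabla(u_m-u_\ell)\|_{0,T}$ follows elementwise by writing $u_m-\mathcal I_{\tri{\ell}}u_m=(u_m-u_\ell)-\mathcal I_{\tri{\ell}}(u_m-u_\ell)$ and using norm equivalence on the finite-dimensional space of piecewise polynomials on the bisected element.
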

\begin{proof} 
Since $u_m$ is the solution of~\eqref{eq:variational_h} on $\tri{m}$ and $u_m-u_l$ is piecewise polynomial of degree $k$ on $\tri{m}$ as well, we have 
\[
(\nabla(u_m-u_\ell),\nabla(u_m-u_\ell)) = (\nabla(u-u_\ell),\nabla(u_m-u_\ell)).
\]
Since $u_\ell$ is the solution of~\eqref{eq:variational_h} on $\tri{\ell}$, we have
\begin{align*}
    \vertiii{u_m-u_\ell}^2
    =
    (\nabla(u_\ell-u),\nabla(u_m-u_\ell))
    =
    (\nabla(u_\ell-u),\nabla(u_m-\mathcal {I}_{\tri{\ell}} u_m)),
\end{align*}
where $\mathcal {I}_{\tri{\ell}}$ is the Lagrange interpolation operator with respect to the triangulation $\tri{\ell}$ .
From the results of~\cite{BraPilSch:09} we obtain
\begin{align*}
    \vertiii{u_m-u_\ell}^2
    =
    -\langle
    R(u_\ell),u_m-\mathcal {I}_{\tri{\ell}} u_m
    \rangle
     =
    -(f - \Pi^{k-1}f+
    \qdelta{},\nabla (u_m-\mathcal {I}_{\tri{\ell}} u_m)
    ) .
    \end{align*}
Outside the refined set $\Rset{1}$ we have $u_m=\mathcal {I}_{\tri{}} u_m$. This include the boundary  $\partial\Rset{1}$, 
so that
$\nabla u_m=\nabla \mathcal {I}_{\tri{\ell}} u_m$ on $\Omega \backslash \Rset{1}$.
Therefore,
\begin{align*}
    \vertiii{u_m-u_\ell}^2
     &=
     \sum\limits_{T \in \Rset{1}}
    -( f - \Pi^{k-1}f+
    \qdelta{},\nabla (u_m-\mathcal {I}_{\tri{\ell}} u_m)
    )_T \\
    &\leq
    \left(
    \left({
     \sum\limits_{T \in \Rset{1}}
    \| \qdelta{}
    \|^2_T}\right)^{1/2}
    +
    \left({
     \sum\limits_{T \in \Rset{1}}
    \| f - \Pi^{k-1}_T f
    \|^2_T}\right)^{1/2}
    \right)\\& \qquad\qquad\times
    \left({
    \sum\limits_{T \in \Rset{1}}
    \| \nabla (u_m-u_\ell+u_\ell-\mathcal {I}_{\tri{\ell}} u_m)\|^2_T
    }\right)^{1/2}.
\end{align*}
From the identity
\[
\| \nabla (u_m-u_\ell+u_\ell-\mathcal {I}_{\tri{\ell}} u_m)\|^2_T =
    \| \nabla 
    (u_m-u_{\tri{\ell}}-
    \mathcal {I}_{\tri{\ell}}(u_m-u_\ell))\|^2_T \leq \| \nabla (u_m-u_\ell)\|^2_T
\]
we obtain
\begin{align*}
    \vertiii{u_m-u_\ell}^2
     &\leq  
\left(\eta(u_\ell,\Rset{1})
+
\osc_{\tri{}}(u_\ell,\Rset{1})
\right)
    \left({
    \sum\limits_{T \in \Rset{1}}
    \| \nabla (u_m-u_\ell)\|^2_T
    }\right)^{1/2} \\&
    \leq
    \left(\eta(u_\ell,\Rset{1})
    +
    \osc_{\tri{}}(u_\ell,\Rset{1})\right)
    \| \nabla (u_m-u_\ell)\|_\Omega.
\end{align*}
    Dividing by $\| \nabla (u_m-u_l)\|_\Omega$  finishes the proof.

\end{proof}

\begin{Lemma}[Discrete Efficiency]
Let $\tri{m}$ be a refinement of $\tri{\ell}$, then
\[
\eta^{\Delta}(u_l,\Rset{j^\star})\lesssim
\vertiii{u_l -u_m}+\osc^\patchsymbol{}_{\tri{}}(u_l,\Rset{1}),
\]
where $j^\star$ is defined in Lemma~\ref{le:j*}.
\label{le:eureka}
\end{Lemma}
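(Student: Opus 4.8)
The plan is to bound the element-based estimator $\eta^\Delta$ by the patch-based estimator $\eta^{\patchsymbol}$ and then to reduce the discrete efficiency of the latter to the discrete efficiency of the local residuals, which becomes available through the interior node property guaranteed by $j^\star$. First I would pass from single elements to patches: since each local contribution $\qdelta{\nu}(u_\ell)$ is supported in $\patch{\nu}$, its restriction to an element $T$ survives only for the vertices of $T$, so that $\qdelta{}(u_\ell)|_T=\sum_{\nu\in\mathcal V_T}\qdelta{\nu}(u_\ell)|_T$. A triangle inequality gives
\[
\eta^\Delta_T(u_\ell)=\|\qdelta{}(u_\ell)\|_{0,T}\le\sum_{\nu\in\mathcal V_T}\|\qdelta{\nu}(u_\ell)\|_{0,\patch{\nu}}=\sum_{\nu\in\mathcal V_T}\eta^{\patchsymbol}_\nu(u_\ell),
\]
and, squaring and summing over $T\in\Rset{j^\star}$ (each $T$ carrying three vertices in $d=2$),
\[
\eta^\Delta(u_\ell,\Rset{j^\star})^2\le 3\sum_{T\in\Rset{j^\star}}\sum_{\nu\in\mathcal V_T}\big(\eta^{\patchsymbol}_\nu(u_\ell)\big)^2=3\,\eta^{\patchsymbol}(u_\ell,\Rset{j^\star})^2 .
\]
It therefore suffices to establish the discrete efficiency of $\eta^{\patchsymbol}$ on $\Rset{j^\star}$.

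Next I would control each patch reconstruction by local residuals. Applying the upper bound~\eqref{eq:lesssim} of Lemma~\ref{lem:reseqrec} on every patch $\patch{\nu}$ with $\nu\in\mathcal V_T$ and $T\in\Rset{j^\star}$ gives $\eta^{\patchsymbol}_\nu(u_\ell)\lesssim\tilde\eta^{res}_\nu(u_\ell)+\osc(f,\nu)$; only this one-sided bound, established directly in Section~\ref{se:equiv} from the Raviart--Thomas construction, is needed here, and not the full two-sided equivalence. The claim is thereby reduced to the estimate $\tilde\eta^{res}(u_\ell,\Rset{j^\star})\lesssim\vertiii{u_\ell-u_m}+\osc^{\patchsymbol}(u_\ell,\Rset{1})$, which is exactly Lemma~\ref{le:H4}.

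I expect the decisive difficulty --- and the reason the enlarged index $j^\star$ cannot be dispensed with --- to lie in the discrete efficiency of the residuals underlying Lemma~\ref{le:H4}, where the \emph{discrete} error $u_m-u_\ell$ has to replace the true error $u-u_\ell$. By Lemma~\ref{le:j*} every $T\in\Rset{j^\star}$ and each of its edges carries an interior node of $\tri{m}$ and satisfies $\tilde\omega_T\subset\Rset{1}$; hence the element- and edge-bubble functions of the Verf\"urth argument are piecewise polynomials of degree $k$ on $\tri{m}$ lying in $H^1_0$, and testing the residual against them while invoking Galerkin orthogonality on $\tri{m}$ turns $u-u_\ell$ into $u_m-u_\ell$, yielding $\eta^{res}(u_\ell,T')\lesssim\vertiii{u_\ell-u_m}_{\omega_{T'}}+\|h_{T'}(id-\Pi^{k-1}_{T'})f\|_{0,\omega_{T'}}$ for each $T'\subset\tilde\omega_T$. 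Without the interior node property these bubbles would not be discrete on $\tri{m}$, Galerkin orthogonality would fail, and one would recover only the continuous error (i.e.\ global efficiency) instead of the required discrete one. Summing over the patches and elements of $\Rset{j^\star}$, and using the finite overlap of the patches together with $\tilde\omega_T\subset\Rset{1}$ to collapse the oscillation contributions into $\osc^{\patchsymbol}(u_\ell,\Rset{1})$, then completes the proof; the remaining bookkeeping of the $\osc(f,\nu)$ terms and of the shape-regularity-dependent overlap constants is routine.
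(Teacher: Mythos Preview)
Your proposal is correct and follows essentially the same route as the paper: bound $\eta^\Delta$ on $\Rset{j^\star}$ by $\eta^{\patchsymbol}$ via the decomposition $\qdelta{}|_T=\sum_{\nu\in\mathcal V_T}\qdelta{\nu}|_T$, and then invoke the discrete efficiency of $\eta^{\patchsymbol}$ obtained from Lemma~\ref{lem:reseqrec} and Lemma~\ref{le:H4}. The paper's proof is simply the terse two-line version of what you have spelled out in detail.
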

\begin{proof}
This result is a consequence of the following inequality 
\[
\eta^\Delta(u_l,\Rset{j^\star})^2=\sum_{T\in\Rset{j^\star}}(\eta_T^\Delta)^2\le \eta^\patchsymbol_{\tri{}}(u_l,\Rset{j^\star})^2
\]
and of the analogous result for the patchwise estimator $\eta^\patchsymbol{}$.
\end{proof}

We have then proved all the conditions that allow us to state a theorem analogue to~\ref{th:mainstar} in the case of the standard estimator $\eta^\Delta$.

\begin{Theorem}
Let $u$ be the solution of~\eqref{eq:variational} and consider the adaptive strategy as in the Theorem~\ref{th:mainstar} with the standard Braess--Sch\"oberl error estimator $\eta^\Delta$ and the oscillation term $\osc^\patchsymbol{}$. Then the sequence of discrete solutions $\{u_\ell\}$ converges with the quasioptimal decay rate
\[
\|u-u_\ell\|_{1,\Omega}+\osc(u_\ell,\tri{\ell})\lesssim (\#\tri{\ell}-\#\tri{0})^{-s}|(v,f,\data)|_\AS.
\]
(see~\eqref{eq:optimality}).
\label{th:maindelta}
\end{Theorem}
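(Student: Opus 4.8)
The plan is to reduce the statement to the abstract convergence and optimality machinery of~\cite{MR2875241}, exactly as in the proof of Theorem~\ref{th:mainstar}, by verifying the five conditions [H1]--[H5] for the element-wise estimator $\eta^\Delta$ paired with the patchwise oscillation $\osc^{\patchsymbol}$. Once these are in place, the contraction property~\eqref{eq:contraction} and the quasioptimal decay~\eqref{eq:optimality} follow at once from the abstract theorems of~\cite{MR2875241}, with the iteration counter $j^\star$ supplied by Lemma~\ref{le:j*}.

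First I would dispose of the two global bounds. Conditions [H1] (global reliability) and [H2] (global efficiency) hold for the Braess--Sch\"oberl estimator by~\cite[Theorems~9.4 and~9.5]{Bra:13}, i.e.~\eqref{eq:BSrel} and~\eqref{eq:BSeff}, and in particular for the single-element variant $\eta^\Delta$; these are available off the shelf and require no further work.

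The localized conditions are where the content lies. Discrete reliability [H3] is precisely the Discrete Reliability Lemma for $\eta^\Delta$ proved above: one tests the residual functional $R(u_\ell)$ against $u_m-\mathcal{I}_{\tri{\ell}}u_m$, uses that this interpolation error is supported in $\Rset{1}$, and splits by Cauchy--Schwarz to isolate $\eta^\Delta(u_\ell,\Rset{1})$ from $\osc_{\tri{}}(u_\ell,\Rset{1})$. Discrete efficiency [H4] is Lemma~\ref{le:eureka}, whose engine is the element-by-element domination $\sum_{T\in\Rset{j^\star}}(\eta_T^\Delta)^2\le\eta^{\patchsymbol}_{\tri{}}(u_\ell,\Rset{j^\star})^2$; this lets me import the discrete efficiency already established for the patch estimator $\eta^{\patchsymbol}$ (obtained in Section~\ref{se:equiv} and Lemma~\ref{le:H4} through its equivalence with the patchwise residual estimator) and thereby conclude the bound for $\eta^\Delta$. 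Oscillation reduction [H5] is then exactly~\eqref{eq:osc2} of Lemma~\ref{le:osc} for $\osc^{\patchsymbol}$, where the $j^\star$-level refined set $\Rset{j^\star}$ is again the one from Lemma~\ref{le:j*}. Finally, since $\osc(u_\ell,\tri{\ell})\le\osc^{\patchsymbol}(u_\ell,\tri{\ell})$ and $\|\cdot\|_{1,\Omega}\simeq\vertiii{\cdot}$, the rate delivered by~\cite{MR2875241} in terms of $\osc^{\patchsymbol}$ implies the stated estimate written with the standard oscillation $\osc$.

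The delicate point --- and the very reason $\eta^\Delta$ cannot be treated through the residual-equivalence route of Section~\ref{se:equiv} --- is the mismatch between the element-wise definition of $\eta^\Delta$ and the patchwise nature of both the discrete efficiency estimate and the oscillation term. I expect the main obstacle to be organizational rather than computational: one must carry the \emph{patchwise} oscillation $\osc^{\patchsymbol}$ and the \emph{$j^\star$-level} refined sets uniformly through [H3]--[H5], so that the domination of $\eta^\Delta$ by $\eta^{\patchsymbol}$ on $\Rset{j^\star}$ remains compatible with the single-level refined set $\Rset{1}$ appearing on the right-hand sides of discrete reliability and efficiency. Once these refined-set levels and oscillation terms are aligned across all five conditions, the abstract result of~\cite{MR2875241} applies verbatim and closes the proof.
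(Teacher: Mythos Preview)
Your proposal is correct and follows essentially the same route as the paper: verify [H1]--[H5] for $\eta^\Delta$ with $\osc^{\patchsymbol}$, obtaining [H3] from the direct Discrete Reliability Lemma, [H4] from Lemma~\ref{le:eureka} via the domination $\eta^\Delta\le\eta^{\patchsymbol}$ on $\Rset{j^\star}$, and [H5] from~\eqref{eq:osc2}, then invoke~\cite{MR2875241}. Your extra observation that $\osc\le\osc^{\patchsymbol}$ justifies the form of the final estimate is a small clarification the paper leaves implicit.
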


Before concluding this section, we would like to briefly comment on the elasticity problem~\eqref{elasticity} for which the Prager--Synge theory has been developed.
In that case the symmetric gradients of the constitutive equation give an additional term in the integration by parts needed for the Prager--Synge Theorem~\ref{th:PS}. The anti-symmetric part of the equilibrated stress has therefore to be controlled. Clearly, symmetric $H(\div)$-conforming stress spaces such as the Arnold--Winther elements (see~\cite{ArnWin:02}) can be used as in~\cite{NicWitWoh:08} or~\cite{AinRan:10}. Another possibility is to impose the symmetric condition in a weak form~\cite{BerKobMolSta:19}. For non-conforming elements the reconstruction procedure simplifies to an element-based reconstruction as shown in~\cite{BerMolSta:17}.

\begin{comment}
\begin{Remark}
Note that for the elasticity problem \eqref{elasticity}, the symmetric gradients of the constitutive equation give an additional term in the integration by parts needed for the Prager-Synge Theorem \ref{th:PS}. The anti-symmetric part of the equilibrated stress has therefore to be controlled. Surely, symmetric H(div)-conforming stress spaces like the Arnold-Winther elements (see \cite{ArnWin:02}) can be used as in \cite{NicWitWoh:08} and \cite{AinRan:10}.
 In order to simplify the reconstruction procedure \cite{BerKobMolSta:19} impose a weakly symmetric condition. For non-conforming elements, the reconstruction procedure even simplify to an element-based reconstruction as shown in \cite{BerMolSta:17}.
\end{Remark}
\end{comment}

\section{Conclusion}

In this paper we discussed the equilibrated flux reconstruction by Braess and Sch\"oberl~\cite{BraSch:08,BraPilSch:09}, stemming from the classical Prager--Synge hypercircle theory~\cite{PraSyn:47}. We recalled the a posteriori error analysis for both an elementwise estimator $\eta^\Delta$ and a patchwise estimator $\eta^\patchsymbol{}$, and we showed how to adapt the abstract theory of~\cite{MR2875241} in order to prove the optimal convergence of the adaptive scheme based on those estimators.

\bibliographystyle{amsalpha}
\bibliography{references}

\end{document}